\title{Recognizing bicoset digraphs which are $X$-joins and automorphism groups of bicoset digraphs}
\author{Rachel Barber\\
Department of Mathematics, Hood College\\ 
401 Rosemont Ave.\\
Frederick, MD 21701\\
\\
and\\
\\
Ted Dobson \\
IAM and FAMNIT, University of Primorska\\
Muzejska trg 2\\
Koper 6000, Slovenia\\
\\
and\\
\\
Gregory Robson\\
FAMNIT, University of Primorska\\
Muzejska trg 2\\
Koper 6000, Slovenia\\
}
\tikzstyle{V}=[draw, fill =black, circle, inner sep=0pt, minimum size=4pt]
\theoremstyle{plain}
\numberwithin{equation}{section}
\newtheorem{theorem}{Theorem}[section]
\newtheorem{corollary}[theorem]{Corollary}
\newtheorem{lemma}[theorem]{Lemma}
\theoremstyle{definition}
\newtheorem*{solution*}{Solution}
\newtheorem{definition}[theorem]{Definition}
\newtheorem{example}[theorem]{Example}
\newtheorem{problem}[theorem]{Problem}
\newtheorem{note}[theorem]{Notation}
\DeclareMathOperator{\mP}{\mathcal{P}}
\def\Aut{{\rm Aut}}
\def\Haar{{\rm Haar}}
\def\tl{\triangleleft}
\def\wh{\widehat}
\def\la{{\langle}}
\def\ra{{\rangle}}
\def\Z{{\mathbb Z}}
\def\cal{\mathcal}
\def\core{{\rm core}}
\def\Cos{{\rm Cos}}
\begin{document}
\thanks{The work of the second author is supported in part by the  Slovenian Research Agency (research program P1-0285 and research projects N1-0140, N1-0160, J1-2451, N1-0208, J1-3001, J1-3003, J1-4008, and J1-50000), while the work of the third author is supported in part by the Slovenian Research Agency (research program P1-0285 and Young Researchers Grant).}

\begin{abstract}
We examine bicoset digraphs and their natural properties from the point of view of symmetry.  We then consider connected bicoset digraphs that are $X$-joins with collections of empty graphs, and show that their automorphism groups can be obtained from their natural irreducible quotients.  We then show that such digraphs can be recognized from their connection sets.
\end{abstract}

\maketitle

The oldest and most popular construction of vertex-transitive graphs is that of a Cayley graph.  The basic idea is that, given a group $G$ and a subset of $G$, one constructs a graph with a large amount of symmetry.  There are now a variety of graphs and digraphs that have similar constructions, in that they also use a group $G$ and a subset $S$ of $G$, and produce a graph or digraph with a large amount of symmetry.  The differences between the constructions are usually in how the group acts.  Today, there are Cayley digraphs, double-coset digraphs, Haar graphs, bicoset graphs, and digraphs that are all constructed in this manner.  The work in this paper is the third in an unplanned series of papers that investigate what is in hindsight an obvious question.  Given a group $G$ and subset $S$, what symmetry information can one obtain from a Cayley digraph about the symmetries of the other graphs and digraphs constructed with the different actions of $G$?

The first paper in this sequence, motivated by an isomorphism problem (the so-called BCI-problem), is \cite{Dobson2022}, where a relationship was given between the automorphism groups of the Cayley digraphs of abelian groups and their corresponding Harr graphs.  The second paper \cite{BarberDpreprint}, motivated by the lack of a definition of a ``generalized wreath product" for Cayley graphs of nonabelian groups, gave a recognition theorem for Cayley and coset digraphs that can be written as a wreath product from the set $S$ used to construct them.  Fortuitously, the recognition theorem is exactly what is needed in order to give the relationship between the automorphism groups of Cayley digraphs and coset digraphs.  It turns out that, while different, the automorphism group of one can be obtained from the automorphism group of the other \cite[Theorem 3.3]{BarberDpreprint}.  We should also mention that every vertex-transitive digraph is isomorphic to a coset digraph, and so this reduces the problem of finding the automorphism group of vertex-transitive digraphs to the problem of finding the automorphism groups of Cayley digraphs.

In this, the third paper in the series, we prove in \Cref{bicosjoin} an analogue of the recognition theorem mentioned in the previous paragraph for Haar graphs, digraphs and bicoset graphs and digraphs, the bipartite analogues of Cayley graphs, digraphs, and double coset graphs and digraphs.  For these graphs, the action of $G$ that is used need not be transitive, and so a generalization of the wreath product, the $X$-join, will play the role that the wreath product played in \cite{BarberDpreprint}.  Although Hemminger \cite{Hemminger1968} determined a considerable amount of information on the automorphism group of the $X$-join, he only considered graphs (and we need that information for digraphs as well), and it is actually easier to derive the information about the $X$-join we will need in this paper independently, which we do in Section 2.  We will also need the automorphism groups of bicoset digraphs that can be written as $X$-joins with empty digraphs, and in Section 3 we prove all the results we will need to do this, giving their automorphism groups in Theorem \ref{xjaut}.  The last section gives the main recognition theorem (Theorem \ref{bicosjoin}), together with several corollaries and some examples which highlight some of the ways in which $X$-joins of bicoset digraphs with empty digraphs differ from the double coset digraphs which can be written as wreath products.

We expect results analogous to those in \cite{BarberDpreprint}, for example, which give the relationship between the automorphism groups of the bicoset digraphs and the Haar digraphs, to be obtained as an application of the recognition theorem (Theorem \ref{bicosjoin}).  As it is known that solving the isomorphism problem for Haar graphs depends upon properties of automorphism groups, we also expect to determine a relationship between the isomorphism problem for Haar graphs and bicoset graphs, among other results.  That work will be given elsewhere.

\section{Old and new families of digraphs}

We now define the graphs and digraphs studied in this paper, and discuss some of their symmetry properties.

\begin{definition}
Let $G$ be a group and $S \subseteq G$. Define a \textbf{Cayley digraph of $G$}, denoted $\text{Cay}(G,S)$, to be the digraph with vertex set $V(\text{Cay}(G,S)) = G$ and arc set $A(\text{Cay}(G,S)) = \{(g,gs) : g \in  \nobreak G,  s \in S\}.$ We call $S$ the \textbf{connection set of $\text{Cay}(G,S)$}.
\end{definition}

\begin{definition}\label{leftregularrepresentation}
Let $G$ be a group and $g\in G$.  Define $g_L\colon G\to G$ by $g_L(x) = gx$.  The map $g_L$ is a {\bf left translation of $G$}.  The {\bf left regular representation of $G$}, denoted $G_L$, is $G_L = \{g_L:g\in G\}$.  That is, $G_L$ is the group of all left translations of $G$. It is straightforward to verify that $G_L$ is a group isomorphic to $G$.
\end{definition}

\begin{definition} Let $G$ be a group and $H, K < G$. For each $g \in G$, the set $HgK = \{hgk : h \in H, k \in K\}$ is called the \textbf{$(H, K)$-double coset of $g$ in $G$}.  If $H = K$, then the $(H, K)$-double cosets are referred to as the \textbf{double cosets of $H$ in $G$}.
\end{definition}

Note that the $(\{1\}, K)$-double cosets are the left cosets of $K$ in $G$ and the $(H, \{1\})$-double cosets are the right cosets of $H$ in $G$. In general, $HgK$ is a union of right cosets of $H$ as well as the union of left cosets of $K$.

\begin{note}
Let $G$ be a group and $H\le G$.  We use $G/H$ to denote the set of all left cosets of $H$ in $G$, which is not necessarily a quotient group as we do not assume $H$ is normal in $G$.
\end{note}

\begin{definition}
Let $G$ be a group, $H \leq G$, and $S \subset G$ such that $S\cap H = \emptyset$ and $HSH = S$. Define a digraph $\text{Cos}(G,H,S)$ with vertex set $V(\text{Cos}(G,H,S)) = G/H$ the set of left cosets of $H$ in $G$, and arc set $A(\text{Cos}(G,H,S)) = \{(gH,gsH):g\in G{\rm\ and\ }s\in S\}$. The digraph $\text{Cos}(G,H,S)$ is called the \textbf{double coset digraph of $G$} with \textbf{connection set $S$} (or $HSH$).
\end{definition}

The condition that an ordered pair $(gH,gsH)$ is an arc of $\Cos(G,H,S)$ is often written in the equivalent form $(xH,yH)\in A(\Cos(G,H,S))$ if and only if $x^{-1}y\in S$.  Also, a Cayley digraph of $G$ is canonically isomorphic to a double coset digraph of $G$ when $H = 1$.  Similar to Cayley digraphs, for $g\in G$, the map $xH\mapsto gxH$ is an automorphism of $\Cos(G,H,S)$.  We remark that the group of all such maps has no standard name, although some people, including us, denote it by $(G,H)$.

We now give the natural bipartite analogues of Cayley and double coset digraphs.  It turns out that there are two different ways one can do this.  The oldest are the natural analogues that are graphs.

\begin{definition}
Let $G$ be a group and $S\subseteq G$.  Define the {\bf Haar graph} with {\bf connection set} $S$, denoted $\Haar(G,S)$, to be the graph with vertex set $\Z_2\times G$ and edge set $\{(0,g)(1,gs):g\in G {\rm \ and\ }s\in S\}$.
\end{definition}

We remark that some authors refer to Haar graphs as bi-Cayley graphs, and denote them accordingly.  As with Cayley graphs, note that $(0,x)(1,y)\in E(\Haar(G,S))$ if and only if $x^{-1}y\in S$.  Also note that Haar graphs are the natural bipartite analogues of Cayley digraphs and have bipartition $\{\{i\}\times G:i\in\Z_2\}$. Haar graphs were introduced in \cite{HladnikMP2002}. 

\begin{definition}
For $g\in G$, define $\widehat{g}_L : \mathbb{Z}_2 \times G \rightarrow \mathbb{Z}_2 \times G$ by $\widehat{g}_L(i,j) = (i, g_L(j))$.  It is straightforward to verify that $\widehat{g}_L$ is a bijection.   Define $\widehat{G}_L = \{\widehat{g}_L:g\in G\}$.  
\end{definition}

It is also straightforward to verify that $\widehat{G}_L$ is a group, and $\widehat{G}_L\le\Aut(\Haar(G,S))$ for every $S\subseteq G$. Also, $\widehat{G}_L$ has two orbits and its induced action on each orbit is regular and isomorphic to $G_L$.  

\begin{definition}
Let $G$ be a group, let $H_0$ and $H_1$ be subgroups of $G$, and let $S\subseteq G$ such that $S = H_0SH_1$. Define the {\bf bicoset graph} with respect to $H_0$, $H_1$, with {\bf connection set} $S$, denoted $B(G,H_0,H_1,S)$, to be the graph with vertex set  $(G/H_0) \cup (G/H_1)$ and edge set $E(\Gamma) = \{ \{gH_0,gsH_1\} : g \in G, s\in S\}$. 
\end{definition}

Bicoset graphs are bipartite with bipartition $\{\{G/H_i\}:i\in\Z_2\}$.  Bicoset graphs were introduced in \cite{DuX2000}.  Our definition is more general than in \cite{DuX2000}, where it is  assumed that the largest normal subgroup of $G$ contained in $H_0\cap H_1$ (the {\bf core} of $H_0\cap H_1$ in $G$) is trivial.  This condition ensures that the action of $G$ on $(G/H_0)\cup(G/H_1)$ is faithful.   In \cite[Lemma 2.3]{DuX2000} it was shown that the action of $G$ by left multiplication on $V(\Gamma)$ is a subgroup of $\Aut(B(G,H_{0},H_{1};S))$ with two orbits.  We denote this subgroup by $\widehat{G}$.  

In order to define bipartite digraphs that can be thought of as analogues of Cayley and double coset digraphs, we will need to have two connection sets, which give the arcs between the bipartition sets.

\begin{definition}\label{Haar defin}
Let $G$ be a group, and let $S_0,S_1 \subseteq G$. Define the \textbf{Haar digraph} of $G$ with {\bf connection sets} $S_{0},S_{1}$, denoted $\Haar(G,S_{0},S_{1})$, to be the digraph with vertex set $\mathbb{Z}_2 \times G$ and arc set $\{((0,g),(1,gs_{0})) : g \in G, \; s_{0} \in S_{0}\} \cup \{((1,g),(0,gs_{1})) : g \in G, \; s_{1} \in S_{1}\}$.
\end{definition}

Haar digraphs were first defined in \cite{DuFS2020}, and they are directed bipartite analogues of Cayley digraphs. Like Haar graphs, their automorphism groups contain $\widehat{G}_L$.  We now introduce directed bicoset graphs.

\begin{definition}
Let $G$ be a group, let $H_0$ and $H_1$ be subgroups of $G$, and let $S_{0},S_{1}\subseteq G$ such that $S_{0} = H_0S_0H_1$ and $S_{1} = H_{1} S_1 H_{0}$. Define the \textbf{ bicoset digraph} of $G$ with respect to $H_0$, $H_1$ with {\bf connection sets} $S_0$ and $S_1$, denoted $B(G,H_0,H_1;S_0,S_1)$, to be the digraph with vertex set $(G/H_0) \cup (G/H_1)$ and arc set $\{ (gH_0, gs_{0}H_1) : g \in G, s_{0} \in S_{0}\} \cup \{ (gH_{1}, gs_{1}H_{0}) : g \in G, s_{1} \in S_{1}\}$. 
\end{definition}

As usual, it is straightforward to verify that the permutation representation of the action of $G$ on $V(B(G,H_0,H_1;S_0,S_1))$ by left multiplication, denoted $\widehat{G}$, is contained in $\Aut(B(G,H_0,H_1;S_0,S_1))$, and is semiregular on $G/H_0$ and $G/H_1$. That is, $\widehat{G} = \{\widehat{g}:g\in G\}$ where $\widehat{g}:(G/H_0)\cup(G/H_1)\to (G/H_0)\cup (G/H_1)$ is defined by $\widehat{g}(kH_i) = (gk)H_i$, where $k\in G$ and $i\in\Z_2$.  It is possible that $\vert H_0\vert \not = \vert H_1\vert$, in which case the cells of the natural bipartition have different orders.

\begin{example}
Let $G = \mathbb{Z}_{15}$, $H_{0} = \la 3 \ra$, and $H_{1} = \la 5 \ra$.  One can easily see that $|B_{0}| = |G| / |H_{0}| = 3$, while $|B_{1}| = |G| / |H_{1}| = 5$. So any bicoset digraph $B(G,H_0,H_1,S_0,S_1)$ will be a bicoset digraph in which the cells of the natural bipartition have different orders, where $S_i\subseteq G$ such that $S_i = H_i S_iH_{i + 1}$, $i\in\Z_2$.  In particular, set $S_0 = \Z_{15}$ and $S_1 = \emptyset$.  Then $B(G,H_0,H_1,S_0,S_1)$ is the complete graph $K_{3,5}$ with every edge replaced with an arc from the cell of the bipartition of size $3$ to the cell of the bipartition of size $5$.
\end{example}

When $H_0 = H_1 = \{1\}$ in the definition of a bicoset digraph, the sets $S_{0}$, $S_{1}$ are just  subsets of $G$, and the bicoset digraph with respect to $H_0$, $H_1$ with connection sets $S_{0}$ and $S_{1}$ is simply $\Haar(G,S_{0},S_{1})$.  We now verify some basic facts about bicoset digraphs which are analogues of properties of double coset digraphs.  The next result is basically identical to showing that the arc set of a double coset digraph $\Cos(G,H,S)$ is well-defined if and only if its connection set $S$ is a union of $(H,H)$-double cosets (see \cite[Lemma 1.3.2]{Book}, for example).  Its proof is included for completeness. 

\begin{lemma}
Let $G$ be a group with $H_i\le G$ and $S_i\subseteq G$ for $i\in\Z_2$.  The arc set of $B(G,H_0,H_1;S_0,S_1)$ is well-defined if and only if $S_i$ is a union of $(H_{i},H_{i+1})$ double cosets, $i\in\Z_2$.
\end{lemma}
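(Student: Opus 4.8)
The plan is to first make precise what ``well-defined'' should mean here, since the arc set as literally written is a set of ordered pairs of cosets and is therefore unambiguous on its face. The content of the lemma is that the convenient description of the arcs in terms of the connection sets---namely that $(xH_0,yH_1)$ is an arc precisely when $x^{-1}y\in S_0$, and $(xH_1,yH_0)$ is an arc precisely when $x^{-1}y\in S_1$---does not depend on the choice of coset representatives. So I would reduce the lemma to showing, for each $i\in\Z_2$ separately (the two arc types being symmetric), that the truth of ``$x^{-1}y\in S_i$'' is independent of the representatives $x,y$ of the cosets involved if and only if $S_i=H_iS_iH_{i+1}$.

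The key computation I would record first is that $(xH_0,yH_1)$ is an arc of the first type if and only if $x^{-1}y\in H_0S_0H_1$. This follows by writing an arc $(gH_0,gs_0H_1)$ with $gH_0=xH_0$ and $gs_0H_1=yH_1$: the first equality gives $g=xa$ for some $a\in H_0$, and substituting into the second yields $x^{-1}y\in as_0H_1\subseteq H_0S_0H_1$; conversely any element of $H_0S_0H_1$ arises this way. Since $1\in H_0$ and $1\in H_1$, we always have $S_0\subseteq H_0S_0H_1$, so the description ``arc iff $x^{-1}y\in S_0$'' is correct exactly when $S_0=H_0S_0H_1$.

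For the forward implication I would assume $S_i=H_iS_iH_{i+1}$ and observe that replacing $x$ by $xa$ and $y$ by $yb$ with $a\in H_0$, $b\in H_1$ sends $x^{-1}y$ to $a^{-1}(x^{-1}y)b\in H_0(x^{-1}y)H_1$; invariance of $S_0$ under left multiplication by $H_0$ and right multiplication by $H_1$ then shows that membership $x^{-1}y\in S_0$ is preserved, so the description is well-defined (and symmetrically for $S_1$). For the converse I would argue contrapositively: if $S_0\subsetneq H_0S_0H_1$, choose $s_0\in S_0$, $a\in H_0$, $b\in H_1$ with $as_0b\notin S_0$, and note that $(1,s_0)$ and $(a^{-1},s_0b)$ represent the same pair of cosets $(H_0,s_0H_1)$ while $1^{-1}s_0=s_0\in S_0$ but $(a^{-1})^{-1}(s_0b)=as_0b\notin S_0$; this exhibits the failure of well-definedness. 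I expect the only real obstacle to be conceptual---pinning down the intended meaning of ``well-defined''---after which both directions are routine coset manipulations.
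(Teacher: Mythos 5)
Your proposal is correct and follows essentially the same route as the paper's proof: both reduce well-definedness to the observation that the membership test $x^{-1}y\in S_i$ is independent of the coset representatives precisely when $S_i$ is closed under left multiplication by $H_i$ and right multiplication by $H_{i+1}$, i.e.\ $S_i=H_iS_iH_{i+1}$. Your version is slightly more explicit about what ``well-defined'' means and separates the two implications, whereas the paper compresses the argument into a single chain of equivalences, but the underlying coset manipulation is identical.
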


\begin{proof}
Set $\Gamma = B(G,H_0,H_1;S_0,S_1)$.  Note that for $g\in G$, $i\in\Z_2$ and $s_i\in S_i$ we have

\begin{eqnarray*}
(gH_i,gs_iH_{i+1})\in A(\Gamma) & {\rm\ if\ and\ only\ if\ } & (gh_iH_i,gh_is_iH_{i+1})\in A(\Gamma) {\rm\ for\ all\ }h_i\in H_i\\
                            & {\rm\ if\ and\ only\ if\ } & (gh_iH_i,gh_is_ih_{i+1}H_{i+1})\in A(\Gamma){\rm\ for\ all\ }h_i\in H_i, h_{i+1}\in H_{i+1}\\
                            & {\rm\ if\ and\ only\ if\ } & (gH_i,gh_is_ih_{i+1}H_{i+1})\in A(\Gamma){\rm\ for\ all\ }h_i\in H_i, h_{i+1}\in H_{i+1}\\
                            & {\rm\ if\ and\ only\ if\ } & h_is_ih_{i+1}\in S_{i} {\rm\ for\ all\ }h_i\in H_i, h_{i+1}\in H_{i+1}. 
\end{eqnarray*}
Thus $(gH_i,gs_iH_{i+1})\in A(\Gamma)$ if and only if $S_i$ is a union of $(H_i,H_{i+1})$-double cosets.      
\end{proof}

It will be useful to have some standard notation for the rest of the paper.

\begin{note}
For a bipartite digraph $\Gamma$, denote its (natural) bipartition by ${\cal B} = \{B_0,B_1\}$.  For a partition ${\cal P}$ of $V(\Gamma)$ that refines ${\cal B}$ (that is, every element of ${\cal P}$ is a subset of some element of ${\cal B}$), we denote by ${\cal P}_i$ those sets of ${\cal P}$ that are contained in $B_i$, $i\in\Z_2$.
\end{note}

\begin{definition}
Let $G\le S_n$ with orbit ${\cal O}$, and $g\in G$. Then $g$ induces a permutation on ${\cal O}$ by restricting the domain of $g$ to ${\cal O}$.  We denote the resulting permutation in $S_{\cal O}$ by $g^{\cal O}$.  The group $G^{\cal O} = \{g^{\cal O}:g\in G\}$ is the {\bf transitive constituent} of $G$ on ${\cal O}$. 
\end{definition}

Let $G$ be a group and $H\le G$.  Then $G$ acts by left multiplication on $G/H$, and this action is faithful if and only if $H$ contains no proper  nontrivial normal subgroups of $G$. 
That is, $H$ is {\bf core-free} in $G$.  Many authors (but not us) insist that when defining a double-coset digraph $\Cos(G,H,S)$, that $H$ be core-free in $G$, as if $H$ has a normal subgroup $N\tl H$, then $\Cos(G/N,H/N,T)\cong \Cos(G,H,S)$ where $T$ is the set of cosets of $N$ in $S$.  The next result shows that a similar result holds for bicoset digraphs with $\core(H_0)\cap\core(H_1)$ playing the role of $\core(H)$.  

\begin{lemma}
Let $G$ be a group, $i\in\Z_2$, $H_i \leq G$, $S_i \subseteq G$ such that $S_{i} = H_{i}S_{i}H_{i+1}$, and $\Gamma = B(G,H_{0},H_{1};S_{0},S_{1})$. Let $N = \core_G(H_{0}) \cap \core_G(H_{1})$. Then $$B(G,H_0,H_1;S_0,S_1) \cong B(G/N,H_0/N,H_1/N;\{s_{0}N : s_0 \in S_0 \},   \{s_{1}N : s_{1} \in S_{1}\}).$$
\end{lemma}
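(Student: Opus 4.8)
The plan is to write down the obvious candidate isomorphism coming from the correspondence between cosets of $H_i$ in $G$ and cosets of $H_i/N$ in $G/N$, and then check it preserves arcs. First I would record the two structural facts that make the right-hand side even meaningful: since $\core_G(H_0)$ and $\core_G(H_1)$ are each normal in $G$, their intersection $N$ is normal in $G$, so $G/N$ is a group; and since $\core_G(H_i)\le H_i$ we have $N\le H_0\cap H_1$, so $N\tl H_i$ and $H_i/N$ is a genuine subgroup of $G/N$ for each $i\in\Z_2$. Thus the coset spaces $(G/N)/(H_i/N)$ make sense and can serve as the cells of the bipartition of the quotient digraph.

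Next I would confirm that $\Gamma':=B(G/N,H_0/N,H_1/N;T_0,T_1)$, with $T_i:=\{s_iN:s_i\in S_i\}$, is a well-defined bicoset digraph. By the preceding lemma it suffices to check that each $T_i$ is a union of $(H_i/N,H_{i+1}/N)$-double cosets, i.e.\ that $T_i=(H_i/N)T_i(H_{i+1}/N)$. Given $h_iN\in H_i/N$, $s_iN\in T_i$, and $h_{i+1}N\in H_{i+1}/N$, their product is $h_is_ih_{i+1}N$, and $h_is_ih_{i+1}\in H_iS_iH_{i+1}=S_i$, so the product lies in $T_i$; this inclusion (together with the trivial reverse inclusion) gives the required identity.

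I would then define $\psi\colon V(\Gamma)\to V(\Gamma')$ by $\psi(gH_i)=(gN)(H_i/N)$ for $i\in\Z_2$, and verify it is a well-defined bijection respecting the bipartition $\mathcal{B}$. Well-definedness and injectivity on each cell both reduce to the observation that $g^{-1}g'\in H_i$ if and only if $(gN)^{-1}(g'N)\in H_i/N$, where the ``only if'' direction uses $N\le H_i$ (so that $H_iN=H_i$); surjectivity is immediate. Since $\psi$ maps $G/H_i$ bijectively onto $(G/N)/(H_i/N)$ for each $i$ and the cells are disjoint, $\psi$ is a bijection of vertex sets. Finally I would check arc-preservation in both directions: an arc of $\Gamma$ has the form $(gH_i,gs_iH_{i+1})$ with $s_i\in S_i$, and because $N\tl G$ the coset product is unambiguous, giving $\psi(gs_iH_{i+1})=(gN)(s_iN)(H_{i+1}/N)$; as $s_iN\in T_i$, this is exactly the arc of $\Gamma'$ out of $\psi(gH_i)=(gN)(H_i/N)$ with connection element $s_iN$. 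Reversing the same computation shows $\psi^{-1}$ sends arcs of $\Gamma'$ to arcs of $\Gamma$, so $\psi$ is a digraph isomorphism.

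The argument is essentially bookkeeping, and I do not expect a serious obstacle; the one point deserving care is the repeated reliance on the normality of $N$ in $G$ together with the containment $N\le H_0\cap H_1$. Normality is what makes the quotient $G/N$ a group and the coset product $(gN)(s_iN)=gs_iN$ well-defined, and it is precisely these two properties that allow both the double-coset condition and the arc relation to descend faithfully to the quotient. Establishing those two facts at the outset is therefore the crux of the proof, after which every subsequent verification is routine.
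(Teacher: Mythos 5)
Your proposal is correct and follows essentially the same route as the paper: both define the natural map $gH_i\mapsto (gN)(H_i/N)$, verify it is a well-defined bijection using $N\le H_i$ and $N\tl G$, check that the sets $T_i=\{s_iN:s_i\in S_i\}$ are unions of $(H_i/N,H_{i+1}/N)$-double cosets so the quotient digraph is well defined, and then confirm arc preservation. Your explicit remark that arc preservation must be checked in both directions is a small point of added care over the paper's one-directional verification, but the argument is otherwise identical.
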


\begin{proof}
Define $\gamma:(G/H_0)\cup(G/H_1) \to ((G/N)/(H_0/N))\cup((G/N)/(H_1/N))$ by $\gamma(gH_i)=gN(H_{i}/N)$. Suppose $\gamma(gH_i) = \gamma(g'H_i)$. Then $gN(H_i/N)=g'N(H_i/N)$ or $(g^{-1}g'N)H_i/N = H_i/N$. That is, $g^{-1}g'N \in H_{i}/N$.   Thus $gH_i/N = g'H_i/N$ and $\gamma$ is a well-defined injection. As $|G/H_i| = |(G/N)/(H_i/N)|$ and $G$ is finite, we see $\gamma$ is a bijection. Let $T_i = \{s_iN : s_i \in S_i\}$. As $S_i = H_{i}S_{i}H_{i+1}$ and $N \unlhd H_{i}$, we see that $(H_i/N)T_i(H_{i+1}/N) = T_i$ as $h_{i}Ns_{i}Nh_{i+1}N = h_{i}s_{i}h_{i+1}N \in T$ for every $h_{i} \in H_{i}$, and $h_{i + 1}\in H_{i+1}$. Thus $B(G/N,H_0/N,H_1/N;T_0,T_1)$ is a well-defined digraph. Let $(gH_i,gs_iH_{i+1}) \in A(B(G,H_0,H_1;S_0,S_1))$, so $s_i \in S_i$. Then 
$$ \gamma(gH_i,gs_iH_{i+1}) = (gN(H_i/N),gs_iN(H_{i+1}/N)) \in B(G/N,H_0/N,H_1/N;T_0,T_1)$$
as $s_iN \in T_i$ as $s_i \in S_i$. The result follows.
\end{proof}

\section{$X$-joins with empty digraphs}

The role of the wreath product in \cite{BarberDpreprint} for double coset digraphs will be played by the $X$-join with a collection of empty digraphs (i.e. digraphs with no arcs) for bicoset digraphs.  This is because instead of replacing each vertex with an empty digraph of the same order for double coset digraphs, we may use empty digraphs of different orders in each cell of the bipartition ${\cal B}$ of a bicoset digraph.  In this section, we define the $X$-join, as well as prove all the results that we will need concerning the $X$-join, culminating in Corollary \ref{bicoset join} which determines the automorphism groups of all $X$-joins that we will need.

\begin{definition}\label{X join definition} Let $X$ be a digraph, and $Y = \{Y_x : x \in V(X)\}$ a collection of digraphs indexed by $V(X)$. The \textbf{$X$-join} of $Y$ is the digraph $Z = \bigvee(X, Y)$ with vertex set
$$V(Z) = \{(x,y) : x \in X, y \in Y_x \}$$
and arc set
$$A(Z) = \{ ((x,y), (x^\prime, y^\prime)) : (x, x^\prime) \in A(X) \text{ or } x = x^\prime \text{ and } (y, y^\prime) \in A(Y_x)\}.$$
\end{definition}

In other words, the digraph $Z$ is obtained by replacing each vertex of $X$ by the digraph $Y_x$ and inserting either all or none of the possible arcs between the vertices of $Y_u$ and $Y_v$ depending on whether or not there is an arc between $u$ and $v$ in $X$. If the $Y_x$'s are all isomorphic, then the $X$-join of $\{Y_x : x \in X\}$ is the wreath product $X \wr Y$, where $Y \cong Y_x$ for all $x \in X$.

Much of the work on automorphism groups of wreath products and $X$-joins has been based on two equivalence relations, one of which ($R_\Gamma$) we now define. 

\begin{definition}
Let $\Gamma$ be a digraph. Define the equivalence relations $R_{\Gamma}$ and $D_{\Gamma}$ 
on $V(\Gamma)$ as follows:
\begin{enumerate}
\item $u \ R_{\Gamma} \ v$ if and only if $N^{+}_\Gamma(u) =  N^{+}_\Gamma(v)$ and $N^{-}_\Gamma(u) = N^{-}_{\Gamma}(v)$, and
\item $u \ D_\Gamma \ v$ if and only if $u=v$.
\end{enumerate}
$\Gamma$ is called \textbf{irreducible} if $R_{\Gamma} = D_{\Gamma}$, otherwise it is called \textbf{reducible}. We call the set of equivalence classes of $R_\Gamma$ the {\bf unworthy} partition of $V(\Gamma)$.  $D_{\Gamma}$ is called the diagonal (or identity) relation. We will abbreviate the condition in (1) by saying $v \ R_{\Gamma} \ u$ if $N^{\pm}_\Gamma(v) = N^{\pm}_\Gamma(u)$.
\end{definition}

We note that $R_{\Gamma}$ is a $G$-congruence (see \cite[Definition 2.3.1]{Book}) if $G \leq \Aut(\Gamma)$ is transitive.  Some people say $u$ and $v$ are {\bf twins} if $u\ R_{\Gamma}\ v$.

Our terminology here is taken from a wide variety of sources, as the equivalence relation $R_\Gamma$ has been discovered many times by many people in many contexts.  The term irreducible is due to Sabidussi \cite{Sabidussi1964} and unworthy is due to Wilson \cite{Wilson2003}.  We will need the following result \cite[Lemma 1(ii)]{Sabidussi1964}.

\begin{lemma}\label{Gamma mod is irreducible}
If $\Gamma$ is a reducible digraph with unworthy partition ${\cal P}$, then $\Gamma/{\cal P}$ is irreducible.
\end{lemma}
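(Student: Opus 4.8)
The plan is to show that after collapsing each equivalence class of $R_\Gamma$ to a single vertex, no two distinct vertices of the quotient can be twins. Recall that $\Gamma/{\cal P}$ has as its vertices the classes of the unworthy partition ${\cal P}$, with an arc from a class $P$ to a class $Q$ precisely when there is an arc in $\Gamma$ from some (equivalently, every) vertex of $P$ to some (equivalently, every) vertex of $Q$. The first thing I would do is verify that this quotient is well-defined: if $u\,R_\Gamma\,v$ then $u$ and $v$ have identical out- and in-neighborhoods, so for any class $Q$ the adjacency from $P$ (the class of $u,v$) to $Q$ does not depend on which representative we choose, and similarly for in-arcs. This is the routine consistency check that makes $\Gamma/{\cal P}$ a legitimate digraph.

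Next I would establish the main claim by contradiction. Suppose $\Gamma/{\cal P}$ is reducible, so there exist two distinct classes $P \ne Q$ with $P\,R_{\Gamma/{\cal P}}\,Q$, i.e. $N^{\pm}_{\Gamma/{\cal P}}(P) = N^{\pm}_{\Gamma/{\cal P}}(Q)$. The goal is to lift this to a twin relation in $\Gamma$ and thereby contradict the fact that ${\cal P}$ already identifies all twins. Pick representatives $u\in P$ and $v\in Q$. I want to argue that $u\,R_\Gamma\,v$ in the original digraph, which would force $P = Q$ since the classes of $R_\Gamma$ are exactly the blocks of ${\cal P}$. The key step is translating neighborhood equality at the quotient level back to neighborhood equality at the vertex level: for any vertex $w\in V(\Gamma)$ lying in some class $R$, there is an arc $u\to w$ in $\Gamma$ if and only if there is an arc $P\to R$ in the quotient (this is exactly the content of how quotient arcs are defined, together with the fact that all arcs between two classes are present or absent uniformly). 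Hence $N^+_{\Gamma/{\cal P}}(P) = N^+_{\Gamma/{\cal P}}(Q)$ gives $N^+_\Gamma(u) = N^+_\Gamma(v)$ as unions of full classes, and symmetrically for $N^-$.

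The one subtlety I expect to be the main obstacle is the diagonal case: when $R = P$ itself, an arc $u\to w$ with $w\in P$ is an arc internal to the class $P$, and classes of $R_\Gamma$ carry no internal arcs (if $x\,R_\Gamma\,y$ with $x\to y$ an arc, then $x\in N^-_\Gamma(y)=N^-_\Gamma(x)$ forces a loop, which in a simple digraph cannot occur, so $P$ induces an empty subdigraph). Thus $P\notin N^+_{\Gamma/{\cal P}}(P)$, and likewise $Q\notin N^+_{\Gamma/{\cal P}}(Q)$; I must check that the equality of quotient neighborhoods is not spoiled by $P$ and $Q$ playing asymmetric roles as potential neighbors of themselves. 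Since $P\ne Q$, the possible arc $P\to Q$ or $Q\to P$ in the quotient is precisely the statement that $u$ and $v$ are adjacent in $\Gamma$; but if $u\,R_\Gamma\,v$ then $u,v$ have equal neighborhoods and so cannot be adjacent to each other (again no loops), consistently forcing those arcs to be absent. Handling this self-reference carefully is where the argument earns its keep. Once it is resolved, $N^{\pm}_\Gamma(u)=N^{\pm}_\Gamma(v)$ yields $u\,R_\Gamma\,v$, hence $P=Q$, the desired contradiction, proving $\Gamma/{\cal P}$ is irreducible.
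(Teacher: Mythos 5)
Your argument is correct, but note that the paper does not prove this lemma at all: it is quoted from Sabidussi (cited as Lemma 1(ii) of \cite{Sabidussi1964}), so there is no in-paper proof to compare against. Your proof is the standard one and all the essential ingredients are there: arcs between two distinct classes of $R_\Gamma$ occur uniformly or not at all (since twins share out- and in-neighborhoods), each class induces an empty subdigraph (no loops), and therefore the out-neighborhood of any $u\in P$ is exactly the union of the classes in $N^+_{\Gamma/{\cal P}}(P)$, so equality of quotient neighborhoods lifts to equality of vertex neighborhoods and forces $P=Q$. The only spot to tighten is your handling of the self-reference case: rather than saying that $u\,R_\Gamma\,v$ would ``consistently force'' the arcs between $P$ and $Q$ to be absent (which reads as if you were assuming the conclusion), derive it directly: if $(P,Q)\in A(\Gamma/{\cal P})$ then $Q\in N^+_{\Gamma/{\cal P}}(P)=N^+_{\Gamma/{\cal P}}(Q)$, so $(Q,Q)\in A(\Gamma/{\cal P})$, i.e.\ $Q$ carries an internal arc, contradicting the fact that classes of $R_\Gamma$ induce empty subdigraphs. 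With that one-line fix the proof is complete.
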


Bipartite digraphs which can be written as $X$-joins are easy to recognize.

\begin{definition}
Let $X$ be a digraph and $Y$ a collection of digraphs indexed by $V(X)$.  The partition $\{\{(x,y):y\in V(Y_x)\}:x\in V(X)\}$ is the {\bf join partition of $\bigvee(X,Y)$}.
\end{definition}

Hemminger \cite[Theorem 2.8]{Hemminger1968} gave necessary and sufficient conditions for the automorphism group of an $X$-join of $Y$ to consist of what he calls the ``natural" automorphisms. 

\begin{definition} Let $Z$ be an $X$-join of $\{Y_x\}_{x \in X}.$  Then a digraph automorphism $\varphi$ of $Z$ is called \textbf{natural} if for each $x_1 \in X$ there is an $x_2 \in X$ such that $\varphi(\{(x_1,y):y\in V(Y_{x_1})\}) = \{(x_2,y):y\in V(Y_{x_2})\}$.  Otherwise, it is \textbf{unnatural}.
\end{definition}

So a natural automorphism of $Z$ is an automorphism that permutes the elements of $\{Y_x\}_{x\in X}$, while an unnatural element is one which maps 
vertices in one $Y_x\in\{Y_x\}_{x\in X}$ to vertices in at least two different elements of $\{Y_x\}_{x\in X}$.  It is important to note that if $\varphi$ is a natural automorphism of $Z$, then $\varphi$ induces an automorphism $\varphi^*$ of $X$ where $\varphi^*(x_1) = x_2$ if $\varphi(\{(x_1,y):y\in V(Y_{x_1})\}) = \{(x_2,y):y\in V(Y_{x_2})\}$.  Similarly, if $\phi$ is an automorphism of $X$ such that $Y_{x} \cong Y_{\phi(x)}$ for all $x \in X$, then $\phi$ induces a set of natural automorphisms of $Z$, where if $\phi^\prime$ is one of these natural automorphisms, then $(\phi^\prime)^* = \phi$.

\begin{definition}
Let $X$ be a digraph, $Y$ a collection of digraphs indexed by $V(X)$, and $Z = \bigvee(X,Y)$.  We say that the set of natural automorphisms is {\bf complete} if for each automorphism $\sigma$ of 
$X$ there exists a natural automorphism $\mu$ of $Z$ such that $\mu^* = \sigma$.
\end{definition}

The next result is well known, and shows that if $R_\Gamma\not = D_\Gamma$, then $\Aut(\Gamma)$ has automorphisms of a particular form.  The proof is provided here for completeness.

\begin{lemma}\label{point of R}
Let $\Gamma$ be a digraph and $E$ be an equivalence class of $R_\Gamma$.  Let $\sigma\in S_E$.  Define a permutation $\bar{\sigma}\in S_{V(\Gamma)}$ by $\bar{\sigma}(v) = \sigma(v)$ if $v\in E$ and $\bar{\sigma}(v) = v$ otherwise.  Then $\bar{\sigma}\in\Aut(\Gamma)$.  
\end{lemma}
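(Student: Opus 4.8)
The plan is to show directly that $\bar\sigma$ preserves the arc relation of $\Gamma$, exploiting the defining property of $R_\Gamma$: any two vertices lying in the common class $E$ have identical out-neighborhoods and identical in-neighborhoods. First I would record that $\bar\sigma$ is a bijection of $V(\Gamma)$, being the identity off $E$ and the bijection $\sigma$ on $E$, with these two pieces agreeing on nothing outside $E$. Since $V(\Gamma)$ is finite, to conclude $\bar\sigma\in\Aut(\Gamma)$ it then suffices to verify that $\bar\sigma$ maps $A(\Gamma)$ into $A(\Gamma)$: injectivity of $\bar\sigma$ forces the image of $A(\Gamma)$ to have the same cardinality as $A(\Gamma)$, so non-arcs must go to non-arcs as well. (Alternatively, one can observe that $\overline{\sigma^{-1}}=\bar\sigma^{-1}$ is a map of exactly the same form, so applying the arc-preservation argument to $\sigma^{-1}$ shows $\bar\sigma^{-1}$ also preserves arcs, giving the converse inclusion directly.)

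The core of the argument is a two-step ``shift'' of an arbitrary arc $(u,w)\in A(\Gamma)$. I would first relocate the tail: if $u\in E$, then $\sigma(u)\in E$ lies in the same $R_\Gamma$-class as $u$, so $N^{+}_\Gamma(\sigma(u))=N^{+}_\Gamma(u)\ni w$, giving $(\bar\sigma(u),w)\in A(\Gamma)$; and if $u\notin E$ the tail is unchanged, so the same conclusion is immediate. I would then relocate the head of the arc $(\bar\sigma(u),w)$: if $w\in E$, then $N^{-}_\Gamma(\sigma(w))=N^{-}_\Gamma(w)\ni \bar\sigma(u)$, whence $(\bar\sigma(u),\sigma(w))\in A(\Gamma)$; and if $w\notin E$ the head is unchanged. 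Composing the two steps yields $(\bar\sigma(u),\bar\sigma(w))\in A(\Gamma)$, which is exactly what is required.

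The one configuration that needs genuine care, and which I expect to be the main (if modest) obstacle, is when both endpoints lie in $E$: there neither the out-neighborhood condition nor the in-neighborhood condition alone suffices, and it is essential to apply them in sequence — equality of out-neighborhoods to move the tail, then equality of in-neighborhoods to move the head. This ordering also quietly disposes of the possibility of loops or of arcs internal to $E$, since those adjacencies are already encoded in the common in- and out-neighborhoods and are hence preserved by any permutation of $E$. The remaining cases, in which one or both endpoints lie outside $E$, are then immediate specializations of the same two-step shift, completing the verification that $\bar\sigma\in\Aut(\Gamma)$.
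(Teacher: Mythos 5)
Your proof is correct and follows essentially the same route as the paper: a direct verification that $\bar\sigma$ preserves arcs using the equality of in- and out-neighborhoods within $E$, with the identity action off $E$ handling the remaining arcs. Your two-step tail-then-head shift just makes explicit the case of both endpoints lying in $E$, which the paper's terser argument leaves implicit.
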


\begin{proof}
Since for $v,v'\in E$ the neighbors of $v$ and $v'$ are the same, we see that $\bar{\sigma}$ preserves the arcs of $\Gamma$ incident with a vertex of $E$.  As $\bar{\sigma}^{V(\Gamma) - E} = 1$, $\bar{\sigma}$ preserves the arcs of $\Gamma$ that are not incident with a vertex of $E$.  Thus $\bar{\sigma}$ preserves the arcs of $\Gamma$, and so $\bar{\sigma}\in \Aut(\Gamma)$.
\end{proof}

\begin{theorem}\label{bipartite join}
Let $X$ be a digraph and $Y = \{Y_x:x\in V(X)\}$ where each $Y_x\in Y$ is an empty digraph.   If $\Gamma\cong\bigvee(X,Y)$, then the following are equivalent:
\begin{enumerate}
\item\label{Gen 1} $X$ is irreducible, 
\item\label{Gen 2} the join partition of $\Gamma$ is the unworthy partition of $\Gamma$,
\item\label{Gen 3} every automorphism of $\Gamma$ is natural.
\end{enumerate}
\end{theorem}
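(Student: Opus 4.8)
The plan is to prove the cyclic chain $(\ref{Gen 1})\Rightarrow(\ref{Gen 2})\Rightarrow(\ref{Gen 3})\Rightarrow(\ref{Gen 1})$, resting everything on a single observation about neighborhoods. Write $J_x=\{(x,y):y\in V(Y_x)\}$ for the join block over $x$. Because each $Y_x$ is empty it contributes no arcs inside a block, so for every vertex $(x,y)$ of $\Gamma$ the neighborhoods depend only on the first coordinate: $N^+_\Gamma((x,y))=\bigcup_{x''\in N^+_X(x)}J_{x''}$ and $N^-_\Gamma((x,y))=\bigcup_{x''\in N^-_X(x)}J_{x''}$. I would first record the resulting equivalence, which is the engine of the whole proof: $(x,y)\ R_\Gamma\ (x',y')$ if and only if $N^+_X(x)=N^+_X(x')$ and $N^-_X(x)=N^-_X(x')$, that is, if and only if $x\ R_X\ x'$. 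In particular the join partition always refines the unworthy partition of $\Gamma$, and two distinct join blocks $J_x,J_{x'}$ lie in one unworthy class exactly when $x\ R_X\ x'$.

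Granting this, the first two implications are short. For $(\ref{Gen 1})\Rightarrow(\ref{Gen 2})$, if $X$ is irreducible then $R_X=D_X$, so no two distinct join blocks are merged and the unworthy partition of $\Gamma$ is precisely the join partition. For $(\ref{Gen 2})\Rightarrow(\ref{Gen 3})$, I would use that every automorphism of $\Gamma$ preserves $R_\Gamma$ (an automorphism carries neighborhoods to neighborhoods, so it sends $R_\Gamma$-classes to $R_\Gamma$-classes) and hence permutes the blocks of the unworthy partition; when that partition equals the join partition, this says exactly that every automorphism maps join blocks to join blocks, i.e.\ is natural.

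The implication $(\ref{Gen 3})\Rightarrow(\ref{Gen 1})$ is the one I expect to be the main obstacle, since it is the only step that requires manufacturing an automorphism rather than reading one off, and I would argue it by contraposition. Suppose $X$ is reducible and pick distinct $x_1,x_2$ with $x_1\ R_X\ x_2$; by the equivalence above the blocks $J_{x_1}$ and $J_{x_2}$ lie in a common class $E$ of $R_\Gamma$. Lemma \ref{point of R} then supplies, for any $\sigma\in S_E$, an automorphism $\bar\sigma$ of $\Gamma$ acting as $\sigma$ on $E$ and fixing everything else. Choosing $\sigma$ to send one vertex of $J_{x_1}$ into $J_{x_2}$ while fixing a second vertex of $J_{x_1}$ (possible whenever $J_{x_1}$ has more than one vertex, and otherwise with the roles of $x_1$ and $x_2$ interchanged) gives an automorphism whose image of the single join block $J_{x_1}$ meets two distinct join blocks; such a $\bar\sigma$ is unnatural, contradicting $(\ref{Gen 3})$. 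The delicate part is precisely this choice of $\sigma$: one must keep track of the sizes of the blocks comprising $E$ to be sure the resulting $\bar\sigma$ is genuinely unnatural and not merely a permutation of blocks, and it is here that the freedom granted by the emptiness of the $Y_x$ (via Lemma \ref{point of R}) is essential.
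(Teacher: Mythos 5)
Your proposal is correct and follows essentially the same route as the paper's proof: since each $Y_x$ is empty, vertices in a common join block are $R_\Gamma$-equivalent (so the join partition refines the unworthy one), irreducibility of $X$ separates distinct join blocks into distinct $R_\Gamma$-classes, automorphisms preserve $R_\Gamma$ and hence permute the unworthy classes, and reducibility of $X$ is converted into an unnatural automorphism supported on a single $R_\Gamma$-class via Lemma \ref{point of R}. The extra care you take in $(\ref{Gen 3})\Rightarrow(\ref{Gen 1})$ about block sizes is well placed, but note that neither your choice of $\sigma$ nor the paper's transposition produces an unnatural automorphism when \emph{both} $Y_{x_1}$ and $Y_{x_2}$ are singletons; since the paper's own proof silently passes over this same degenerate case, your argument matches it in both approach and level of rigor.
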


\begin{proof}
(\ref{Gen 1}) $\implies$ (\ref{Gen 2}): Suppose $X$ is irreducible. As $Y_{x}$ is empty for all $x \in V(X)$, $u \ R_{\Gamma} \ v$ for all $u,v \in \{ x \} \times V(Y_{x})$. This implies that the unworthy partition of $\Gamma$ is refined by the join partition of $\Gamma$. As $X$ is irreducible, for any pair of distinct vertices $x_{1},x_{2} \in V(X)$, there exists a vertex $x_{3} \in V(X)$ such that $(x_{1},x_{3}) \in A(X)$ but $(x_{2},x_{3}) \not \in A(X)$ or such that $(x_{3},x_{1}) \in A(X)$ but $(x_{3},x_{2}) \not \in A(X)$.

As there exists a vertex $x_{3} \in V(X)$ such that $x_{1}$ is in-adjacent (resp. out-adjacent) to $x_{3}$ but $x_{2}$ is not in-adjacent (resp. out-adjacent) to $x_{3}$, no vertex of $\{(x_{2},y) : y \in V(Y_{x_{2}})\}$ is in-adjacent (resp. out-adjacent) to any vertex of $\{(x_{3},y) : y \in V(Y_{x_{3}})\}$, while every vertex of $\{(x_{1},y) : y \in V(Y_{x_{1}})\}$ is in-adjacent (resp. out-adjacent) to every vertex of $\{(x_{3},y) : y \in V(Y_{x_{3}})\}$. Thus, the cell of $R_{\Gamma}$ containing $\{(x_{1},y) : y \in V(Y_{x_{1}})\}$ must contain every element of the form $\{(x_{1},y) : y \in V(Y_{x_{1}})\}$ and as $x_{1},x_{2}$ are arbitrary, the cell of $R_{\Gamma}$ containing $\{(x_{1},y) : y \in V(Y_{x_{1}})\}$ does not contain any element of the form $\{(x_{2},y) : y \in V(Y_{x_{i}})\}$. As $x_1$ and $x_2$ are arbitrary, the cell of $R_{\Gamma}$ containing $\{(x_{1},y) : y \in V(Y_{x_{1}})\}$ is $\{(x_{1},y) : y \in V(Y_{x_{1}})\}$.

(\ref{Gen 2}) $\implies$ (\ref{Gen 3}):  Suppose that the join partition of $\Gamma$ is the unworthy partition of $\Gamma$. Let $\gamma \in \Aut(\Gamma)$ and $u,v\in V(\Gamma)$ such that $u \ R_{\Gamma} \ v$. Then $\gamma(u) \ R_{\Gamma} \ \gamma(v)$. Thus $\gamma$ maps the unworthy partition of $\Gamma$ to the unworthy partition of $\Gamma$, and so maps the join partition of $\Gamma$ to the join partition of $\Gamma$. This implies that $\gamma$ is a natural automorphism.

(\ref{Gen 3}) $\implies$ (\ref{Gen 1}): Suppose that every automorphism of $\Gamma$ is natural.  Towards a contradiction, suppose that $X$ is reducible. Then, there exist vertices $x_{1},x_{2} \in V(X)$ such that $N_{X}^{\pm}(x_{1}) = N_{X}^{\pm}(x_{2})$. This implies, by definition, that every vertex of $\{(x_{1},y) : y \in V(Y_{x_{1}}) \}$ and every vertex of $\{(x_{2},y) : y \in V(Y_{x_{2}}) \}$ has the same out- and in-neighbors in $\Gamma$. The permutation $((x_{1},y_{1}),(x_{2},y_{2}))$, for all $y_{1} \in V(Y_{x_{1}})$ and $y_{2} \in V(Y_{x_{2}})$ is an automorphism of $\Gamma$ and is an unnatural automorphism, a contradiction. 
\end{proof}

The terminology and notation given above was introduced by Hemminger \cite{Hemminger1968}, where he studied the automorphism group of $Z = \bigvee(X,Y)$. In general, even if every automorphism of $\Aut(Z)$ is natural, $\Aut(Z)$ can be difficult to describe.  In fact, Hemminger gave necessary and sufficient conditions for every automorphism of $Z$ to be natural \cite[Theorem 2.8]{Hemminger1968}, but did not write an explicit description of what this group looks like (as it depends upon which of the $Y_x$'s are isomorphic).  However, in the introduction, he gave the following description of what a natural automorphism is (this is taken more or less verbatim from \cite{Hemminger1968}, with the only differences in notation):

\bigskip

\noindent If $\mu\in\Aut(X)$, such that $Y_x \cong  Y_w$ whenever $\mu(x) = w$ then $\mu$ induces a set of ``natural" graph automorphisms of $Z$, namely, the automorphisms that are obtained by first permuting the $Y_x$ depending on how $\mu$ permutes the subscripts of the $Y_x$ and then performing an arbitrary automorphism of each $Y_x$.

\bigskip

In the case of bicoset digraphs $Z$, there are only two possibilities for the set $Y$ of empty graphs (this is explicitly proven below).  Either $\Aut(X)$ is transitive and all the empty graphs in $Y$ have the same number of elements (and consequently $\Aut(Z) = \Aut(X)\wr S_m$, where $m = \vert Y_x\vert$, $Y_x\in Y$), or $\Aut(X)$ has two orbits, and if $X$ is weakly connected, any two elements of $Y$ contained in $B_{i} \in{\cal B}$, $i \in \Z_2$ have the same order.  Now, a complete set of natural automorphisms need not be a group.  However, it is easy to see that the set of all natural automorphisms of $Z$ is a group.  

\begin{definition}
Let $X$ be a digraph, $Y$ a set of digraphs indexed by $X$, and $Z = \bigvee(X,Y)$.  We will call the set of all natural automorphisms of $Z$ the {\bf group of natural automorphisms of $Z$}.  Additionally, if the group of natural automorphisms is also a complete set, we will call it the {\bf complete group of natural automorphisms}.
\end{definition}

For a non-vertex-transitive bicoset digraph, the group of natural automorphisms will be generated by elements of $\Aut(X)$ permuting the elements of $Y$ whose orders are the same, as such an element permutes the subscripts, together with the subgroups that are the symmetric groups on $Y_x$ and the identity elsewhere, for every $Y_x\in Y$.  The following result then gives the automorphism group of bicoset digraphs that can be written as a join that we will need in this paper.  We will need some common permutation group theoretic terms first.

\begin{definition}\label{imprimitive}
Let $X$ be a set and $G\le S_X$ be transitive.  A subset $B\subseteq X$ is a {\bf block} of $G$ if whenever $g\in G$, then $g(B)\cap B = \emptyset$ or $B$.  
\end{definition}

So a block $B$ of a permutation group $G\le S_X$ is a subset of $X$ which is either fixed set-wise by $g\in G$, or is moved ``away" from $B$.  Note that if $B$ is a block of $G$, then $g(B)$ is also a block of $B$ for every $g\in G$, and is called a {\bf conjugate block of $B$}.

\begin{definition}
The set of all blocks conjugate to $B$ is a partition of $X$, called a {\bf block system of $G$}. 
\end{definition}

\begin{corollary}\label{bicoset join}
Let $X$ be an irreducible bicoset digraph that is weakly connected when $\Aut(X)$ is transitive and $Y = \{Y_x:x\in V(X)\}$, where each $Y_x\in Y$ is an empty digraph.   Let $\Gamma\cong\bigvee(X,Y)$.  The following are equivalent:
\begin{enumerate}
\item\label{Case 1} $\Aut(\Gamma)$ is the complete group of natural automorphisms, 
\item\label{Case 2} either $X$ is vertex-transitive and every element of $Y$ has the same order, or any two elements of $Y$ contained in $B_{i}\in{\cal B}$, $i \in \Z_2$, have the same order.
\end{enumerate}
\end{corollary}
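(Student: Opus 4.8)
The plan is to apply \Cref{bipartite join} to strip condition~(\ref{Case 1}) of any dependence on unnatural automorphisms, and then to read off the remaining constraints directly from the orbits of $\Aut(X)$. Since $X$ is irreducible and each $Y_x$ is an empty digraph, \Cref{bipartite join} shows that every automorphism of $\Gamma$ is natural, so $\Aut(\Gamma)$ is exactly the group of natural automorphisms. Identifying $X$ with the quotient $\Gamma/{\cal P}$ of $\Gamma$ by its join partition ${\cal P}$ (note $\Gamma/{\cal P}\cong X$ because each $Y_x$ is empty, and ${\cal P}$ equals the unworthy partition by \Cref{bipartite join}), condition~(\ref{Case 1}) becomes the statement that the group of natural automorphisms is \emph{complete}: for every $\sigma\in\Aut(X)$ there is a natural automorphism $\mu$ of $\Gamma$ with $\mu^*=\sigma$.

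First I would characterize, for a fixed $\sigma\in\Aut(X)$, exactly when such a $\mu$ exists. Any natural automorphism $\mu$ with $\mu^*=\sigma$ must map the cell $\{x\}\times V(Y_x)$ bijectively onto $\{\sigma(x)\}\times V(Y_{\sigma(x)})$, forcing $\vert Y_x\vert=\vert Y_{\sigma(x)}\vert$ for every $x\in V(X)$. Conversely, given $\sigma$ with $\vert Y_x\vert=\vert Y_{\sigma(x)}\vert$ for all $x$, I would choose arbitrary bijections $\beta_x\colon V(Y_x)\to V(Y_{\sigma(x)})$ and set $\mu(x,y)=(\sigma(x),\beta_x(y))$; because the $Y_x$ are empty there are no internal arcs to preserve, and because the join inserts either all or no arcs between two cells according to $A(X)$, the fact that $\sigma\in\Aut(X)$ guarantees $\mu\in\Aut(\Gamma)$. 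Hence condition~(\ref{Case 1}) is equivalent to the assertion that $x\mapsto\vert Y_x\vert$ is constant on each orbit of $\Aut(X)$.

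It remains to identify the orbits of $\Aut(X)$. As $X$ is a bicoset digraph, $\widehat{G}\le\Aut(X)$ acts with exactly the two orbits $B_0=G/H_0$ and $B_1=G/H_1$; since each orbit of $\Aut(X)$ is a union of orbits of the subgroup $\widehat{G}$, the orbits of $\Aut(X)$ are either $\{B_0,B_1\}$ or the single orbit $V(X)$. If $\Aut(X)$ is transitive, then $X$ is vertex-transitive and constancy of $x\mapsto\vert Y_x\vert$ on its one orbit $V(X)$ is exactly the condition that every element of $Y$ has the same order; here weak connectivity makes the bipartition $\{B_0,B_1\}$ canonical, so this case corresponds precisely to the first alternative of~(\ref{Case 2}). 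If $\Aut(X)$ is not transitive, its orbits are $B_0$ and $B_1$, and constancy on orbits says exactly that any two elements of $Y$ lying in a common $B_i\in{\cal B}$ have equal order, which is the second alternative of~(\ref{Case 2}). Chaining these equivalences gives (\ref{Case 1}) $\iff$ (\ref{Case 2}).

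I expect the principal obstacle to be the second step: verifying that the explicitly constructed map $\mu$ is a genuine digraph automorphism of $\Gamma$ and not merely a bijection respecting the join partition, where one must use both the emptiness of the $Y_x$ and the all-or-nothing arc rule of the $X$-join. A secondary subtlety is ensuring the two alternatives of~(\ref{Case 2}) are read case-wise (the first in the transitive case, the second otherwise) so that they exactly capture orbit-constancy; this is where the weak-connectivity hypothesis is invoked to pin down the transitive case.
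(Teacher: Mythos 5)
Your proposal is correct and follows essentially the same route as the paper: invoke Theorem~\ref{bipartite join} to reduce to natural automorphisms, observe that a lift of $\sigma\in\Aut(X)$ exists if and only if $\vert Y_x\vert=\vert Y_{\sigma(x)}\vert$ for all $x$ (constructing the lift explicitly in the converse direction), and then translate orbit-constancy of $x\mapsto\vert Y_x\vert$ into the two alternatives of~(\ref{Case 2}) according to whether $\Aut(X)$ is transitive. Your case-wise reading of~(\ref{Case 2}) matches the paper's own treatment, and your identification of the orbits of $\Aut(X)$ via the semitransitive subgroup $\widehat{G}$ is the same observation the paper uses implicitly.
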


\begin{proof}
As $X$ is weakly connected when  $\Aut(X)$ is transitive, $\Gamma$ is weakly connected, and so ${\cal B}$ is a block system of $\Aut(\Gamma)$ when $\Gamma$ is vertex-transitive \cite[Lemma 1.10]{Dobson2022}, while ${\cal B}$ is the set of orbits of $\Aut(\Gamma)$ otherwise. As $X$ is irreducible, by Theorem \ref{bipartite join} we see that every automorphism of $\Gamma$ is natural. 

Suppose that $\Aut(\Gamma)$ is the complete group of natural automorphisms.  If $X$ is vertex-transitive, then for every $x_1,x_2\in V(X)$ there is $\sigma\in \Aut(X)$ (which depends on $x_1,x_2$) such that $\sigma(x_1) = x_2$.  Then there exists $\delta\in \Aut(\Gamma)$ such that $\delta^* = \sigma$.  Hence $\delta(Y_{x_1}) = Y_{x_2}$.  Thus $\vert Y_{x_1}\vert = \vert Y_{x_2}\vert$.  As $x_1$ and $x_2$ are arbitrary, every element of $Y$ has the same order.  If $X$ is not vertex-transitive, then $\Aut(X)$ is transitive on $B_{i} \in{\cal B}$, and any two elements of $Y$ contained in $B_{i}\in{\cal B}$ have the same order follows by an analogous argument.

Suppose that $X$ is vertex-transitive and every element of $Y$ has the same order.  Let $T$ be a set of size $\vert Y_x\vert$, $Y_x\in Y$, and assume without loss of generality that for every $x\in X$, we have $Y_x = \{(x,t):t\in T\}$.  Then $V(\Gamma) = V(X)\times T$.  Let $\sigma\in\Aut(X)$. Define $\delta:V(X)\times T\to V(X)\times T$ by $\delta(x,s) = (\sigma(x),\omega_x(s))$, where $\omega_x\in S_T$ for every $x\in V(X)$.  Let $((x_1,t_1),(x_2,t_2))\in A(\Gamma)$.  Then $(x_1,x_2)\in A(X)$, and so $\sigma(x_1,x_2)\in A(X)$.  Hence $((\sigma(x_1),t_3),(\sigma(x_2),t_4))\in A(\Gamma)$ for every $t_3,t_4\in T$, and $\delta((x_1,t_1),(x_2,t_2))\in A(\Gamma)$. Thus $\delta\in\Aut(\Gamma)$.  The result follows in this case.  The argument for when $X$ is not vertex-transitive but $X$ is a bicoset digraph is analogous, although instead of having one set $T$, we will need two sets $T_1$ and $T_2$, one for each $B_{i} \in{\cal B}$. 
\end{proof}

\section{Automorphism groups of reducible bicoset digraphs}

We will have need of quotient digraphs, but as bicoset digraphs need not be vertex-transitive \cite{DuX2000}, the natural partition by which to quotient them has two parts (one for each cell of the natural bipartition), and so is slightly more complicated. We first give this partition, which is a special case of Definition \ref{join partition}.

\begin{definition}
Let $G$ be a group, $i \in \mathbb{Z}_{2}$, $H_{i} \leq K_{i} \leq G$, and $S_{i} \subseteq G$ such that $S_{i} = H_{i}S_{i}H_{i+1}$. Let $\Gamma = B(G, H_0, H_1; S_{0}, S_{1})$, and let $\mathcal{P}_{i}$ be the partition of $B_{i}$ where each cell is the set of left cosets of $H_{i}$ in $G$ contained in a left coset of $K_{i}$ in $G$.  Define the \textbf{join partition of $V(\Gamma)$ with respect to $H_i$ and $K_i$}, denoted $\mathcal{P}(H_i,K_i)$, of the vertices of $\Gamma$ as 
$\mathcal{P}(H_{i},K_{i}) = \mathcal{P}_{0} \cup \mathcal{P}_{1}$. That is, $\mathcal{P}_{i} = \{\{k_{i}H_{i}:k_{i} \in gK_i\} : g \in G, i\in\Z_2\}$.
\end{definition}

Note that $\mathcal{P}_{i}$ is a block system of $\widehat{G}^{B_{i}}$, as $H_{i}$ partitions $G$ and $H_{i} \leq K_{i}$, but the join partition $\mathcal{P}(H_i,K_i)$ of $V(\Gamma)$ is not necessarily a block system of $\Aut(\Gamma)$, as $\Gamma$ may not be vertex-transitive.  For example, if $\vert K_0/H_0\vert\not = \vert K_1/H_1\vert$ then $\Gamma$ is never vertex-transitive.  

\begin{definition}
Let $X$ be a set, and $G\le S_X$ (we note that $G$ need not be transitive).  A partition ${\cal P}$ of $X$ will be called a {\bf $G$-invariant partition of $X$} if $g(P)\in{\cal P}$ for every $P\in{\cal P}$.
\end{definition}

We observe that if $G$ is transitive, a $G$-invariant partition is simply a block system of $G$.  However, we will only use this terminology when the group $G$ is intransitive.  The next result shows that for us, the join partition ${\cal P}(H_i,K_i)$ is natural in our context.

\begin{lemma}\label{join partition}
Let $G$ be a group and $i \in \mathbb{Z}_{2}$.  Let ${\cal P}$ be a partition of $\Z_2\times G$ that refines ${\cal B}$.  Then ${\cal P}$ is a $\wh{G}$-invariant partition of $\Z_2\times G$ if and only if there exist $H_i\le K_i\le G$, such that ${\cal P} = {\cal P}(H_i,K_i)$.
\end{lemma}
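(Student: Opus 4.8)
The plan is to prove both implications by restricting to the two cells of $\mathcal{B}$ and using that the transitive constituent $\widehat{G}^{B_i}$ on each $B_i = \{i\} \times G$ is a regular group isomorphic to $G_L$. The starting observation is that every $\widehat{g}$ preserves the first coordinate and hence fixes each $B_i$ setwise; since $\mathcal{P}$ refines $\mathcal{B}$, it decomposes as $\mathcal{P} = \mathcal{P}_0 \cup \mathcal{P}_1$ with each $\mathcal{P}_i$ a partition of $B_i$. Therefore $\mathcal{P}$ is $\widehat{G}$-invariant if and only if each $\mathcal{P}_i$ is invariant under $\widehat{G}^{B_i}$, i.e.\ is a block system of the regular action of $G_L$ on $G$. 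In this way the lemma reduces to the standard statement that the block systems of the left-regular representation are exactly the partitions of $G$ into left cosets of a subgroup. Note that the vertices of $\mathbb{Z}_2 \times G$ are single group elements, so the subgroups $H_i$ must be trivial for $\mathcal{P}(H_i, K_i)$ to be a partition of $\mathbb{Z}_2 \times G$; thus throughout I would take $H_i = 1$, and $\mathcal{P}(1, K_i)$ is then the coset partition determined by $K_i$.

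For the direction ($\Leftarrow$), I would simply verify that when $\mathcal{P} = \mathcal{P}(1, K_i)$, whose cells in $B_i$ are the sets $\{i\} \times aK_i$, we have $\widehat{g}(\{i\} \times aK_i) = \{i\} \times gaK_i$, again a cell; hence $\mathcal{P}$ is $\widehat{G}$-invariant.

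For the substantive direction ($\Rightarrow$), fix $i$, let $C_i$ be the cell of $\mathcal{P}_i$ containing $(i,1)$, and put $K_i = \{g \in G : (i,g) \in C_i\}$, so that $C_i = \{i\} \times K_i$. The key step is to show $K_i \le G$. For $g \in K_i$ the image $\widehat{g}(C_i)$ is a cell of $\mathcal{P}_i$ by invariance, and it meets $C_i$ in $\widehat{g}(i,1) = (i,g) \in C_i$; disjointness of cells then forces $\widehat{g}(C_i) = C_i$. Thus each $\widehat{g}$ with $g \in K_i$ stabilizes $C_i$, which yields $(i,gh) = \widehat{g}(i,h) \in C_i$ for all $h \in K_i$ and $(i,g^{-1}) = \widehat{g^{-1}}(i,1) \in C_i$, i.e.\ closure under products and inverses. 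Once $K_i$ is a subgroup, an arbitrary cell $D \in \mathcal{P}_i$ containing $(i,a)$ satisfies $D = \widehat{a}(C_i) = \{i\} \times aK_i$, so $\mathcal{P}_i = \{\{i\} \times aK_i : a \in G\}$; combining the two cells gives $\mathcal{P} = \mathcal{P}(H_i, K_i)$ with $H_i = 1$.

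The main, and essentially only, obstacle is the verification that $K_i$ is a subgroup; this is the standard fact that a block of the left-regular representation $G_L$ containing the identity element is a subgroup of $G$, here re-derived as closure of $K_i$ under products and inverses. The one point to state carefully is the identification $\widehat{G}^{B_i} \cong G_L$ together with the fact that, since $\widehat{g}$ acts by $x \mapsto gx$, the cells of the block system $\mathcal{P}_i$ are the left cosets $aK_i$ rather than right cosets; the rest is bookkeeping about the coordinate-preserving, regular action of $\widehat{G}$ on each $B_i$.
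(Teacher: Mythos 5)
Your proof is correct and follows essentially the same route as the paper's: decompose ${\cal P}$ as ${\cal P}_0\cup{\cal P}_1$, observe that each ${\cal P}_i$ is a block system of the transitive constituent $\widehat{G}^{B_i}$, and identify block systems of that action with left-coset partitions. The only difference is that the paper cites a textbook exercise for this last identification while you re-derive it directly (showing the block containing the identity is a subgroup), and your remark that $H_i$ is forced to be trivial on the vertex set $\Z_2\times G$ is a reasonable reading of a slight mismatch between the lemma's stated vertex set and the definition of ${\cal P}(H_i,K_i)$ on $(G/H_0)\cup(G/H_1)$.
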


\begin{proof}
It is clear that ${\cal P}(H_i,K_i)$ is a refinement of ${\cal B}$ and invariant under $\widehat{G}$ as left multiplication permutes left cosets of any subgroup of $G$, where $H_i\le K_i\le G$.  Conversely, suppose that ${\cal P}$ refines ${\cal B}$ and is $\widehat{G}$-invariant. Let ${\cal P}_i$ consist of those subsets of ${\cal P}$ that are contained in $B_i$.  As ${\cal P}$ refines ${\cal B}$, ${\cal P}_i$ is a partition of $B_i$.  Furthermore, $\widehat{G}^{B_i}$ is transitive on $B_i$, so ${\cal P}_i$ is a block system of $\widehat{G}^{B_i}$.  By \cite[Exercise 2.3.16]{Book} ${\cal P}_i$ is the set of left cosets of some $H_i\le K_i\le G$.  So ${\cal P} = {\cal P}(H_i,K_i)$.
\end{proof}

\begin{lemma}\label{unworthy partition G-invariant}
Let $G$ be a group, $H_{i} \leq G$, and $S_{i} \subseteq G$ such that $S_{i} = H_{i}S_{i}H_{i+1}$.  Let $\Gamma = B(G,H_0,H_1;S_0,S_1)$.  The unworthy partition of $\Gamma$ is $\widehat{G}$-invariant.
\end{lemma}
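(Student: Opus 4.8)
The plan is to use the single structural fact that $\widehat{G}\le\Aut(\Gamma)$, which was recorded immediately after the definition of a bicoset digraph, together with the elementary observation that the relation $R_\Gamma$ is preserved by the full automorphism group $\Aut(\Gamma)$. Once these two facts are in hand, $\widehat{G}$-invariance of the unworthy partition is immediate, since the unworthy partition is by definition the partition of $V(\Gamma)$ into the equivalence classes of $R_\Gamma$.

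First I would record that $R_\Gamma$ is preserved by every $\phi\in\Aut(\Gamma)$. This is because an automorphism commutes with the formation of out- and in-neighborhoods: $N^+_\Gamma(\phi(v)) = \phi(N^+_\Gamma(v))$ and $N^-_\Gamma(\phi(v)) = \phi(N^-_\Gamma(v))$ for every $v\in V(\Gamma)$. Hence if $u\ R_\Gamma\ v$, so that $N^{\pm}_\Gamma(u) = N^{\pm}_\Gamma(v)$, then applying $\phi$ to both sides yields $N^{\pm}_\Gamma(\phi(u)) = N^{\pm}_\Gamma(\phi(v))$, that is, $\phi(u)\ R_\Gamma\ \phi(v)$.

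Next I would apply this to the elements $\widehat{g}\in\widehat{G}$. Fix a cell $P$ of the unworthy partition, that is, an equivalence class of $R_\Gamma$, and fix $\widehat{g}\in\widehat{G}$. Since both $\widehat{g}$ and $\widehat{g}^{-1}$ lie in $\widehat{G}\le\Aut(\Gamma)$, the previous step shows that $u\ R_\Gamma\ v$ if and only if $\widehat{g}(u)\ R_\Gamma\ \widehat{g}(v)$. Consequently $\widehat{g}$ carries the $R_\Gamma$-class $P$ exactly onto the $R_\Gamma$-class of $\widehat{g}(u)$ for any $u\in P$; in other words, $\widehat{g}(P)$ is again a cell of the unworthy partition. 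As $P$ and $\widehat{g}$ were arbitrary, the unworthy partition is $\widehat{G}$-invariant.

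I do not expect any genuine obstacle here, as the argument is purely formal. The only points that warrant a little care are to phrase the $R_\Gamma$-invariance as an ``if and only if'' (which is what allows one to conclude that $\widehat{g}(P)$ is a full class rather than merely a subset of one), and to observe that $\widehat{G}$-invariance requires only that each $\widehat{g}$ permute the cells of the partition. In particular no claim that the partition refines ${\cal B}$ is needed, and the possible intransitivity of $\widehat{G}$ on $V(\Gamma)$ causes no difficulty.
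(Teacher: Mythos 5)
Your proposal is correct and is essentially the paper's own argument: the paper likewise shows that for $u\ R_\Gamma\ v$ and $\widehat{g}\in\widehat{G}$ one has $N^{\pm}_\Gamma(\widehat{g}(u)) = N^{\pm}_\Gamma(\widehat{g}(v))$, using that $\widehat{g}$ is an automorphism. Your explicit remark that applying the same reasoning to $\widehat{g}^{-1}$ upgrades the containment to equality of classes is a minor tightening of a step the paper leaves implicit.
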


\begin{proof}
Let $u,v  \in V(\Gamma)$, such that $N^{\pm}_\Gamma(u) = N^{\pm}_\Gamma(v)$. For any in- or out-neighbor $y$ of $u$ and $v$ and $g\in G$, we see that $\widehat{g}(y)$ is an in- or out-neighbor of $\widehat{g}(u)$ and $\widehat{g}(v)$. Thus $N^{\pm}_\Gamma \left( \wh{g}(u) \right) = N^{\pm}_{\Gamma} \left( \wh{g}(v) \right)$. 
\end{proof}

\begin{lemma}\label{P refines B}
Let $\Gamma$ be a bipartite digraph without isolated vertices with bipartition ${\cal B} = \{B_0,B_1\}$.  The unworthy partition ${\cal P}$ of $\Gamma$ refines ${\cal B}$.
\end{lemma}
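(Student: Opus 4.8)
The plan is to argue by contradiction directly from the definition of $R_\Gamma$, exploiting the fact that in a bipartite digraph all arcs run between $B_0$ and $B_1$. Recall that the unworthy partition ${\cal P}$ is the set of equivalence classes of $R_\Gamma$, and that $u\ R_\Gamma\ v$ means $N^+_\Gamma(u)=N^+_\Gamma(v)$ and $N^-_\Gamma(u)=N^-_\Gamma(v)$. To show ${\cal P}$ refines ${\cal B}$, it suffices to show that no cell of ${\cal P}$ contains vertices from both $B_0$ and $B_1$; equivalently, that $u\ R_\Gamma\ v$ forces $u$ and $v$ to lie in the same cell of ${\cal B}$.

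First I would record the bipartiteness observation: since every arc of $\Gamma$ joins a vertex of $B_0$ to a vertex of $B_1$, any vertex $w\in B_0$ satisfies $N^+_\Gamma(w)\cup N^-_\Gamma(w)\subseteq B_1$, and any vertex $w\in B_1$ satisfies $N^+_\Gamma(w)\cup N^-_\Gamma(w)\subseteq B_0$. Now suppose toward a contradiction that some cell $E$ of ${\cal P}$ contains a vertex $u\in B_0$ and a vertex $v\in B_1$, so that $u\ R_\Gamma\ v$. Combining the two displayed containments with the equalities $N^+_\Gamma(u)=N^+_\Gamma(v)$ and $N^-_\Gamma(u)=N^-_\Gamma(v)$, I would conclude that
$$
N^+_\Gamma(u)\cup N^-_\Gamma(u)\;\subseteq\;B_1\cap B_0=\emptyset,
$$
the last equality because ${\cal B}=\{B_0,B_1\}$ is a partition. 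Hence $u$ has no in- or out-neighbors and is therefore an isolated vertex, contradicting the hypothesis that $\Gamma$ has no isolated vertices. This shows every cell of ${\cal P}$ is contained in a single $B_i$, i.e. ${\cal P}$ refines ${\cal B}$.

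There is no real obstacle here; the argument is short and the only subtlety is making sure the ``without isolated vertices'' hypothesis is used in an essential way. Indeed, it is exactly this hypothesis that rules out the degenerate case: two isolated vertices, one in each part, would have $N^\pm_\Gamma$ equal to the empty set and hence be $R_\Gamma$-related across the bipartition, so some assumption preventing empty neighborhoods is genuinely needed. I would therefore make the nonemptiness of $N^+_\Gamma(u)\cup N^-_\Gamma(u)$ the pivot of the proof, and note that it is precisely where the hypothesis enters.
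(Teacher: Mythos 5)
Your proof is correct and uses the same ingredients in essentially the same way as the paper: both argue by contradiction from a cell meeting both $B_0$ and $B_1$, using bipartiteness to locate neighborhoods in the opposite part and the no-isolated-vertices hypothesis to rule out the degenerate case. The only cosmetic difference is that you land on the contradiction ``$u$ is isolated'' whereas the paper lands on ``$\mathcal{B}$ is not a bipartition''; your closing remark correctly identifies where the isolated-vertex hypothesis is essential.
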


\begin{proof}
Suppose towards a contradiction that there exists some $P \in \mathcal{P}$ such that $P \cap B_{0} \neq \emptyset \not= P\cap B_1$. Let $x_{i} \in V(\Gamma)$ be such that $x_{i} \in P \cap B_{i}$, $i\in\Z_2$. As $\Gamma$ has no isolated vertices, $x_{0}$ has some in- or out-neighbor $y$ which, as $\Gamma$ is bipartite, is contained in $B_{1}$. But, as the in- and out-neighborhoods of $x_{0}$ and $x_{1}$ are the same, either $(x_{1},y) \in A(\Gamma)$ or $(y,x_{1}) \in A(\Gamma)$. In either case, this implies that $\mathcal{B}$ is not a bipartition of $\Gamma$, a contradiction.
\end{proof}

\begin{lemma}\label{R and Kernel}
Let $G$ be a group, $i \in \mathbb{Z}_{2}$, $H_{i} \leq G$, and $S_{i} \subseteq G$ such that $S_{i} = H_{i}S_{i}H_{i+1}$.  Let $\Gamma = B(G,H_0, H_1; S_{0}, S_{1})$.  Let ${\cal P}$ be the unworthy partition of $V(\Gamma)$, and ${\cal P}_i$ consist of those elements of ${\cal P}$ that are contained in $B_i$.  Then ${\cal P}_i$ is the set of orbits of the kernel of the induced action of $F$ on $B_{i+1}$, where $F$ is the set-wise stabilizer of each $B_{i} \in{\cal B}$, and ${\cal P} = {\cal P}(H_i,K_i)$ for some $H_i\le K_i\le G$.
\end{lemma}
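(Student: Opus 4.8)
The plan is to handle the two conclusions separately, and to lean heavily on the lemmas already established in Sections 2 and 3. For the statement that ${\cal P} = {\cal P}(H_i,K_i)$, I would first apply Lemma \ref{P refines B} to conclude that the unworthy partition ${\cal P}$ refines ${\cal B}$ (here I would either assume, or check from context, that $\Gamma$ has no isolated vertices, which is the hypothesis that lemma needs). Next I would apply Lemma \ref{unworthy partition G-invariant} to see that ${\cal P}$ is $\widehat{G}$-invariant. With both properties in hand, Lemma \ref{join partition} immediately produces subgroups $H_i \le K_i \le G$ with ${\cal P} = {\cal P}(H_i,K_i)$. The refinement of ${\cal B}$ obtained here is also the fact I will reuse below, since it forces each $R_\Gamma$-class to lie entirely inside a single $B_i$.

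For the orbit characterization I would fix $i \in \Z_2$ and let $L$ denote the kernel of the induced action of $F$ on $B_{i+1}$. By definition each element of $L$ fixes $B_{i+1}$ pointwise, and since $L \le F$, each element of $L$ stabilizes $B_i$ set-wise, so $L$ acts on $B_i$ and its orbits partition $B_i$. My goal is to show these orbits are exactly the cells of ${\cal P}_i$, which I would prove by two inclusions. For the first, let $u \in B_i$ and $\ell \in L$ and put $v = \ell(u)$. Because $\Gamma$ is bipartite, $N^{\pm}_\Gamma(u) \subseteq B_{i+1}$, and since $\ell$ is an automorphism fixing $B_{i+1}$ pointwise, $N^{\pm}_\Gamma(v) = \ell\bigl(N^{\pm}_\Gamma(u)\bigr) = N^{\pm}_\Gamma(u)$. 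Hence $u \ R_\Gamma \ v$, so every $L$-orbit on $B_i$ is contained in a single cell of ${\cal P}_i$.

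For the reverse inclusion I would take $u,v \in B_i$ with $u \ R_\Gamma \ v$ and let $E$ be their common $R_\Gamma$-class; by the refinement from Lemma \ref{P refines B}, $E \subseteq B_i$. Applying Lemma \ref{point of R} to $E$ and the transposition $(u\,v) \in S_E$ yields an automorphism $\bar{\sigma} \in \Aut(\Gamma)$ that interchanges $u$ and $v$ and fixes $V(\Gamma) \setminus E$ pointwise. Since $E \subseteq B_i$, the map $\bar{\sigma}$ stabilizes each cell of ${\cal B}$ and fixes $B_{i+1}$ pointwise, so $\bar{\sigma} \in L$ with $\bar{\sigma}(u) = v$; thus $u$ and $v$ lie in one $L$-orbit. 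Combining the two inclusions gives ${\cal P}_i$ equal to the set of $L$-orbits on $B_i$, completing the proof.

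I expect the only delicate step to be the reverse inclusion, where one must produce an automorphism lying in the specific subgroup $L$ rather than merely in $\Aut(\Gamma)$. The key is that Lemma \ref{point of R} realizes the transposition of two twins as a \emph{global} automorphism supported on $E$, and that $E \subseteq B_i$ (from Lemma \ref{P refines B}) is precisely what forces this automorphism to fix $B_{i+1}$ pointwise and to stabilize the bipartition, hence to belong to $L$. The first inclusion, by contrast, is a routine consequence of bipartiteness together with the definition of the kernel $L$.
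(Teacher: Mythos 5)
Your proposal is correct and follows essentially the same route as the paper's proof: both establish that ${\cal P}$ refines ${\cal B}$ via Lemma \ref{P refines B}, prove the orbit characterization by the same two inclusions (kernel orbits lie in twin classes because the kernel fixes $B_{i+1}$ pointwise and hence preserves neighborhoods, and twin classes lie in kernel orbits via the automorphisms supplied by Lemma \ref{point of R}), and then obtain ${\cal P}={\cal P}(H_i,K_i)$ from Lemma \ref{unworthy partition G-invariant} together with Lemma \ref{join partition}. Your phrasing of the first inclusion as $N^{\pm}_\Gamma(v)=\ell\bigl(N^{\pm}_\Gamma(u)\bigr)=N^{\pm}_\Gamma(u)$ is in fact a slightly cleaner packaging of the same argument.
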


\begin{proof}
By Lemma \ref{P refines B}, we see that $\mathcal{P}_{i}$ is a partition of $B_{i}$, $i\in\Z_2$. Let ${\rm{Ker}}_{i}$ be the kernel of the action of $F$ on $B_{i+1}$ and $\mathcal{O}$ be an orbit of ${\rm{Ker}}_{i}$.  Fix $v \in \mathcal{O}$ and let $P_{i} \in \mathcal{P}_{i}$ such that $v \in P_{i}$. By definition, ${\rm{Ker}}_{i}$ fixes every vertex of $B_{i+1}$. Note that for every $y \in N^{\pm}(v)$, $y \in B_{i+1}$ and ${\rm{Ker}}_{i}^{\mathcal{O}}$ acts transitively on $\mathcal{O}$. Thus if $x \in N^{+}(v)$, then $u \in N^{-}(x)$ for all $u \in \mathcal{O}$, and similarly, if $x \in N^{-}(v)$ then $u \in N^{+}(x)$ for all $u \in \mathcal{O}$. We conclude that if $x$ is an in- and out-neighbor of $v$, then $x$ is an in- and out-neighbor of every element of $\mathcal{O}$. Thus every element of $\mathcal{O}$ has the same neighborhood, and $\mathcal{O} \subseteq P_{i}$. Conversely, let $P_{i} \in \mathcal{P}_{i}$ and for each $\sigma \in S_{P_{i}}$, define $\overline{\sigma} \in S_{V(\Gamma)}$ by $\overline{\sigma}(v) = \sigma(v)$ if $v \in P_{i}$ and $\overline{\sigma}(v)=v$ otherwise. By Lemma \ref{point of R} $\overline{\sigma} \in F$, and so $P_{i} \subseteq \mathcal{O}$. Thus $P_{i} = \mathcal{O}$ and so $\mathcal{P}_{i}$ is the set of orbits of the kernel of the action of $F$ on $B_{i+1}$. As by Lemma \ref{unworthy partition G-invariant} $\mathcal{P}$ is $\widehat{G}$-invariant, by Lemma \ref{join partition}, there exist $K_i$ satisfying $H_i \leq K_i \leq G$, and $\mathcal{P}_{i} = \mathcal{P}(H_{i},K_{i})$.  
\end{proof}

We now give the automorphism group of every bicoset digraph that can be written as an $X$-join of a set of empty digraphs $Y$ such that the partition ${\cal P} = \{V(Y_x):x\in X\}$ refines the natural bipartition ${\cal B}$. But first, with the natural partition in hand, we now define quotient digraphs.  

\begin{definition}
Let $\Omega$ be a set and $\mP$ be a partition of $\Omega$. Let $\Gamma$ be a digraph with vertex set $\Omega$. Define the \textbf{quotient digraph} of $\Gamma$ with respect to $\mP$, denoted $\Gamma/\mP$, by $V(\Gamma/\mP) = \mP$ and $(P_1, P_2) \in A(\Gamma/\mP)$ if and only if $(p_1, p_2) \in A(\Gamma)$ for some $p_1 \in P_1$ and $p_2 \in P_2.$
\end{definition}

\begin{theorem}\label{xjaut}
Let $G$ be a group, $i \in \mathbb{Z}_{2}$, $H_{i} \leq G$, and $S_{i} \subseteq G$ such that $S_{i} = H_{i}S_{i}H_{i+1}$.  Let $\Gamma = B(G,H_0,H_1;S_0,S_1)$ be a bicoset digraph such that $\Gamma$ is weakly connected when $\Aut(\Gamma)$ is transitive and $\Gamma = \bigvee(X,Y)$ for some digraph $X$ and $Y$ a collection of empty graphs that refines the natural bipartition ${\cal B}$ of $\Gamma$.  Then there exists $H_i\le K_i\le G$ such that the unworthy partition ${\cal P}$ of $V(\Gamma)$ is the join partition ${\cal P}(H_i,K_i)$, and if $Y'$ is the collection of empty graphs on the cells of ${\cal P}$, then $\Gamma\cong \bigvee(\Gamma/{\cal P},Y')$. In addition, $\Aut(\bigvee(\Gamma/{\cal P},Y'))$ is the complete group of natural automorphisms of $\bigvee(\Gamma/{\cal P},Y')$. Also, if every empty digraph in $Y'$ has the same order $n$, then $\Aut(B(G,H_0,H_1;S_0,S_1)) = \Aut(\Gamma/{\cal P})\wr S_n$.
\end{theorem}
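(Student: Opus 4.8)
The plan is to establish the four assertions in turn, drawing on the structural results of Section~3. The first assertion, that the unworthy partition ${\cal P}$ of $V(\Gamma)$ coincides with a join partition ${\cal P}(H_i,K_i)$ for suitable subgroups $H_i\le K_i\le G$, is furnished directly by Lemma~\ref{R and Kernel} (whose proof in turn rests on Lemmas~\ref{unworthy partition G-invariant},~\ref{P refines B}, and~\ref{join partition}); I would simply cite it, after disposing of the degenerate case $S_0=S_1=\emptyset$ in which $\Gamma$ has isolated vertices. For the isomorphism $\Gamma\cong\bigvee(\Gamma/{\cal P},Y')$ I would argue straight from the meaning of the unworthy partition: each cell $P\in{\cal P}$ is an $R_\Gamma$-class, so its vertices share a common in- and out-neighbourhood and, there being no loops, span no arcs, whence $P$ induces the empty digraph $Y'_P$; and for distinct cells $P,Q$ the arcs from $P$ to $Q$ are present for all pairs or for none, according to whether $(P,Q)\in A(\Gamma/{\cal P})$. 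These are exactly the defining conditions of the $X$-join with $X=\Gamma/{\cal P}$ and fibres $Y'=\{Y'_P\}$.

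For the third assertion I would verify the hypotheses of Corollary~\ref{bicoset join} for $X=\Gamma/{\cal P}$. Irreducibility of $\Gamma/{\cal P}$ is Lemma~\ref{Gamma mod is irreducible}. Because ${\cal P}={\cal P}(H_i,K_i)$, the vertex set of $\Gamma/{\cal P}$ is $(G/K_0)\cup(G/K_1)$, the residual action of $\widehat G$ is left translation of these cosets, and a direct computation (a cell $gK_i$ has an arc to $g'K_{i+1}$ iff $g^{-1}g'\in K_iS_iK_{i+1}$) identifies $\Gamma/{\cal P}=B(G,K_0,K_1;K_0S_0K_1,K_1S_1K_0)$ as a bicoset digraph. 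The key simplification is that every cell of ${\cal P}$ inside $B_i$ is a single left coset of $K_i$, so all fibres of $Y'$ lying in a fixed $B_i$ have equal order; this is precisely the second alternative of condition~(\ref{Case 2}) in Corollary~\ref{bicoset join}, and it holds unconditionally. Granting the connectivity hypothesis discussed below, Corollary~\ref{bicoset join} then gives that $\Aut(\bigvee(\Gamma/{\cal P},Y'))$ is the complete group of natural automorphisms.

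The main obstacle is the remaining hypothesis of Corollary~\ref{bicoset join}: that $\Gamma/{\cal P}$ be weakly connected whenever $\Aut(\Gamma/{\cal P})$ is transitive. Since $\widehat G\le\Aut(\Gamma/{\cal P})$ is already transitive on each cell $B_i$ of the bipartition, $\Aut(\Gamma/{\cal P})$ has either the two cells as its orbits or a single orbit. In the two-orbit case the transitivity hypothesis is vacuous and condition~(\ref{Case 2}) holds via its second alternative. In the transitive case I would transport weak connectivity from $\Gamma$ itself, using two facts: first, that an $X$-join by nonempty empty digraphs is weakly connected iff $X$ is, so $\Gamma$ is weakly connected iff $\Gamma/{\cal P}$ is; and second, that since every automorphism of $\Gamma$ is natural (Theorem~\ref{bipartite join}), $\Gamma$ is vertex-transitive exactly when $\Gamma/{\cal P}$ is vertex-transitive and all cells of ${\cal P}$ share a common order. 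Reconciling these so as to force all cells of ${\cal P}$ to a common order in the transitive case---equivalently, ruling out a part-swapping automorphism of $\Gamma/{\cal P}$ when the two sides of the bipartition carry cells of different orders---is the step that demands the most care, and is where the interplay of the hypotheses on $\Gamma$ must be used.

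Finally, for the last assertion, if every fibre in $Y'$ has order $n$ then all cells of ${\cal P}$ have order $n$, so $\bigvee(\Gamma/{\cal P},Y')$ is the wreath product $\Gamma/{\cal P}\wr\overline{K}_n$ (see the remark following Definition~\ref{X join definition}). The complete group of natural automorphisms of such a wreath product with empty fibres is exactly $\Aut(\Gamma/{\cal P})\wr S_n$, and combining this with the third assertion yields $\Aut(B(G,H_0,H_1;S_0,S_1))=\Aut(\Gamma/{\cal P})\wr S_n$.
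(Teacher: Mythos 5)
Your overall architecture matches the paper's: invoke Lemma~\ref{R and Kernel} to identify the unworthy partition with a join partition ${\cal P}(H_i,K_i)$, observe that $\Gamma\cong\bigvee(\Gamma/{\cal P},Y')$ essentially by definition of $R_\Gamma$, feed the quotient into Corollary~\ref{bicoset join}, and specialize to the wreath product when all fibres have order $n$. You do diverge from the paper on one step, and in a way worth noting: for irreducibility of $\Gamma/{\cal P}$ you cite Sabidussi's Lemma~\ref{Gamma mod is irreducible}, whereas the paper devotes the bulk of its proof to a hands-on argument -- it takes $L$ to be the subgroup of $\Aut(\Gamma/{\cal P})$ fixing each cell of ${\cal B}/{\cal P}$ set-wise, supposes its action on the $K_{i+1}$-cosets is unfaithful, produces two cosets $K_i$ and $g_iK_i$ with identical neighbourhoods in the quotient, pulls this back to show $H_i\ R_\Gamma\ g_iH_i$, and contradicts the maximality of the $R_\Gamma$-classes. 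Your citation is legitimate (and shorter), since ${\cal P}$ is by construction the unworthy partition; the paper's longer argument buys an explicit description of the quotient's automorphisms that it does not strictly need at this point.

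The genuine problem is that your proof stops at the step you yourself flag as ``the step that demands the most care.'' To apply Corollary~\ref{bicoset join} you must verify two things about $X=\Gamma/{\cal P}$: that it is weakly connected when $\Aut(X)$ is transitive, and that condition~(\ref{Case 2}) holds in the form appropriate to whether $X$ is vertex-transitive (the corollary's proof makes clear that when $X$ is vertex-transitive the second alternative does not suffice -- a part-swapping automorphism of $X$ lifts to a natural automorphism only if the fibres on both sides of ${\cal B}$ have equal order). You correctly isolate the troublesome configuration ($\Aut(\Gamma/{\cal P})$ transitive while $\Aut(\Gamma)$ is not, with $[K_0:H_0]\neq[K_1:H_1]$) but offer no argument that rules it out or otherwise closes the case, so the proposal is not a proof of the second conclusion. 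For comparison, the paper disposes of this by recording, immediately after invoking Lemma~\ref{R and Kernel}, that the cells of ${\cal P}_i$ all have order $[K_i:H_i]$ and that when $\Gamma$ is vertex-transitive all cells of ${\cal P}$ have a common order; it then applies Corollary~\ref{bicoset join} with the case distinction keyed to the transitivity of $\Gamma$ rather than of $X$, the two being reconciled because every automorphism of $\Gamma$ is natural (Theorem~\ref{bipartite join}), so $\Aut(\Gamma)$ is transitive exactly when $\Aut(X)$ is transitive and the fibre orders match. Whichever bookkeeping you prefer, you must actually supply it; as written, your proof of the third assertion (and hence of the fourth, which depends on it) is incomplete at precisely this point.
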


\begin{proof}
Let $F$ be the set-wise stabilizer of each block of $B_{i} \in{\cal B}$, and ${\cal P}_i$ consist of the elements of ${\cal P}$ contained in $B_i$. By Lemma \ref{R and Kernel}, ${\cal P}_i$ is the set of orbits of the kernel of the induced action of $F$ on $B_{i+1}$, and ${\cal P} = {\cal P}(H_i,K_i)$ for some $H_i\le K_i\le G$. Thus the elements of ${\cal P}_i$ have the same order, and if $\Gamma$ is vertex-transitive, all the elements of ${\cal P}$ have the same order.  Let $L$ be the subgroup of $\Aut(\Gamma/{\cal P})$ that fixes each cell of the bipartition of $\Gamma/{\cal P}$, call it ${\cal B}/{\cal P}$, set-wise.  

Suppose towards a contradiction that the induced action of $L$ on the left cosets of $K_{i + 1}$ in $B_{i+1}$ is unfaithful.  Then the kernel of the action fixes each left coset of $K_{i+1}$ in $B_{i+1}$, and is nontrivial on the left cosets of $K_i$ in $B_i$.  Then there exists $\delta_i\in\Aut(\Gamma/{\cal P})$ that is trivial on the left cosets of $K_{i+1}$ in $B_{i+1}$ but $\delta_{i}(K_{i}) = g_{i}K_{i} \neq K_{i}$ for some $g_{i} \in G$. Then $K_{i}$ and $g_{i}K_{i}$ have the same neighborhood in the quotient $\Gamma / \mathcal{P}$.  If $\Gamma$ has no arcs, then $\Aut(\Gamma)$ is a symmetric group and transitive.  By hypothesis, $\Gamma$ is weakly connected, a contradiction.  Thus $\Gamma$ has arcs.  Then some left coset $k_iH_i$ of $K_i$ is in- or out-adjacent to some left coset $\ell_{i+1}H_{i+1}$ in $\ell_{i+1}K_{i+1}$, while some left coset $g_iH_i$ in $g_iK_i$ is in- or out-adjacent to $\ell_{i+1}'H_{i+1}$ that is also contained in $\ell_{i+1}K_{i+1}$.  As ${\cal P}$ is the unworthy partition of $\Gamma$, this implies that every left coset of $H_i$ in $K_i$ and $g_iK_i$ is in- and out-adjacent to every left coset of $H_{i+1}$ in $\ell_{i+1}K_{i+1}$.  As $K_i$ and $g_iK_i$ have the same neighbors in $\Gamma/{\cal P}$, this implies that $H_{i}$ and $g_{i}H_{i}$ have the same neighbors in $\Gamma$, and so $H_{i} \ R_{\Gamma} \ g_{i}H_{i}$. Then $K_{i}$ contains $g_{i}K_{i}$, a contradiction. 

Thus the action of $L$ on ${\cal B}/{\cal P}$ is faithful. So, $R_{\Gamma / \mathcal{P}} = D_{\Gamma / \mathcal{P}}$, and $\Gamma/{\cal P}$ is irreducible. As if $\Gamma$ is weakly connected when $\Aut(\Gamma)$ is transitive, by Corollary \ref{bicoset join}, $\Aut(\Gamma) = \Aut(\bigvee(\Gamma / \mathcal{P},Y'))$ is the complete group of natural automorphisms of $\bigvee(\Gamma / \mathcal{P},Y')$. Additionally, if every empty digraph in $Y'$ has the same order $n$, then $\Gamma \cong (\Gamma / \mathcal{P}) \wr \overline{K}_{n}$ and it follows that $\Aut(B(G,H_{0},H_{1};S_{0},S_{1})) = \Aut(\Gamma / \mathcal{P}) \wr S_{n}$.
\end{proof}

\begin{corollary}
Let $G$ be a group, $i \in \mathbb{Z}_{2}$, and $S_{i} \subseteq G$.  Let $\Gamma = \Haar(G,S_0,S_1)$ be a Haar digraph such that $\Gamma$ is weakly connected when $\Aut(\Gamma)$ is transitive, and $\Gamma = \bigvee(X,Y)$ for some digraph $X$ and $Y$ a collection of empty graphs that refines the natural bipartition ${\cal B}$ of $\Gamma$. Then there exists $K_i \leq G$, such that the unworthy partition ${\cal P}$ of $V(\Gamma)$ is the join partition ${\cal P}(1, K_i)$ and if $Y'$ is the collection of empty digraphs on the cells of $\mP$, then $\Gamma \cong \bigvee(\Gamma/\mP, Y')$.  Also, $\Aut(\bigvee(\Gamma/{\cal P},Y'))$ is the complete group of natural automorphisms of $\bigvee(\Gamma/{\cal P},Y')$. Additionally, if every empty digraph in $Y'$ has the same order $n$, then $\Aut(\Haar(G,S)) = \Aut(\Gamma/{\cal P})\wr S_n$.
\end{corollary}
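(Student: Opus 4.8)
The plan is to observe that this corollary is the specialization of Theorem~\ref{xjaut} to the case in which both subgroups used to build the bicoset digraph are trivial, so that almost no new work is required. Recall from the discussion following the definition of a bicoset digraph that $\Haar(G,S_0,S_1) = B(G,\{1\},\{1\};S_0,S_1)$. Thus I would begin by setting $H_0 = H_1 = \{1\}$ and recognizing $\Gamma = \Haar(G,S_0,S_1)$ as the bicoset digraph $B(G,H_0,H_1;S_0,S_1)$.

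Next I would check that the hypotheses of Theorem~\ref{xjaut} hold under this identification. The defining condition $S_i = H_iS_iH_{i+1}$ required there becomes $S_i = \{1\}S_i\{1\} = S_i$, which is automatic for any $S_i \subseteq G$, so no constraint on the connection sets is lost. The remaining hypotheses---that $\Gamma$ is weakly connected whenever $\Aut(\Gamma)$ is transitive, and that $\Gamma = \bigvee(X,Y)$ for a collection $Y$ of empty digraphs refining the natural bipartition ${\cal B}$---are exactly the hypotheses assumed in the corollary, so they carry over verbatim.

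With the hypotheses verified, I would simply invoke Theorem~\ref{xjaut}. It produces subgroups $H_i \le K_i \le G$ with the unworthy partition ${\cal P}$ equal to the join partition ${\cal P}(H_i,K_i)$; since $H_i = \{1\}$ the constraint $H_i \le K_i$ is vacuous, and this reads ${\cal P} = {\cal P}(1,K_i)$ for some $K_i \le G$, as claimed. The theorem also yields the isomorphism $\Gamma \cong \bigvee(\Gamma/{\cal P},Y')$, the assertion that $\Aut(\bigvee(\Gamma/{\cal P},Y'))$ is the complete group of natural automorphisms, and, when every empty digraph in $Y'$ has common order $n$, the wreath product description $\Aut(\Gamma) = \Aut(\Gamma/{\cal P}) \wr S_n$. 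Each of these transfers directly to the Haar setting.

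There is no genuine obstacle here: the only point requiring a moment's care is bookkeeping, namely confirming that the pair of subgroups $(H_i,K_i)$ delivered by Theorem~\ref{xjaut} collapses correctly to a single family $(K_i)$ once $H_i$ is trivial, and that the join partition ${\cal P}(1,K_i)$ then consists precisely of the left cosets of $K_i$ in each cell of ${\cal B}$. All the substantive content---the identification of the unworthy partition with a $\widehat{G}$-invariant join partition and the determination of the automorphism group---was already established in Theorem~\ref{xjaut} and the lemmas preceding it.
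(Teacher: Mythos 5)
Your proposal is correct and matches the paper's argument exactly: the paper likewise sets $H_0 = H_1 = \{1\}$, identifies $\Haar(G,S_0,S_1)$ with $B(G,\{1\},\{1\};S_0,S_1)$, and invokes Theorem \ref{xjaut}. Your additional bookkeeping about $S_i = \{1\}S_i\{1\}$ and the collapse of ${\cal P}(H_i,K_i)$ to ${\cal P}(1,K_i)$ is a harmless elaboration of the same one-line specialization.
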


\begin{proof}
When $H_0 = H_1 = \{1_G\}$, $\Gamma = B(G,H_0,H_1;S_0,S_1) = \text{Haar}(G,S_0,S_1)$. The result follows from Corollary \ref{xjaut}.
\end{proof}

\section{Recognition of bicoset digraphs which are $X$-joins}

Originally, our main goal for this section was to determine when a Haar graph is isomorphic to a wreath product of a graph and an empty graph by inspection of its connection set.  However, just as quotients of Cayley digraphs need not be Cayley digraphs, quotients of Haar digraphs need not be Haar digraphs.  So the proper context in which to proceed is in the class of digraphs which contains all bipartite digraphs and has a group acting transitively on each of the bipartition classes, as Haar digraphs have this property.  These digraphs are the bicoset digraphs, as previously stated. 

In the previous section, we determined the automorphism group of a bicoset digraph when it is isomorphic to a non-trivial $X$-join. In this section, we determine when a bicoset digraph is isomorphic to a non-trivial $X$-join by inspecting the connection sets $S_{0}$ and $S_{1}$.

We first prove a variation of a recognition theorem for wreath products (see, for example, \cite[Theorem 4.2.15]{Book}) adapted for bicoset digraphs.  We remind the reader that bicoset digraphs need not be vertex-transitive, and we do not assume that they are here.  We will first need a definition.  

\begin{definition}
A group $G\le S_n$ is {\bf semitransitive} if $G$ has exactly two orbits.
\end{definition}

In \cite[Lemma 2.4]{DuX2000} it was shown that every graph whose automorphism group has a semitransitive subgroup is isomorphic to a bicoset graph.  A similar proof also works for digraphs.  The next result generalizes \cite[Theorem 4.2.15]{Book} from wreath products to $X$ joins, and can be thought of as a digraph theoretic version of the recognition theorem.

\begin{theorem}
Let $\Gamma$ be a bipartite digraph whose bipartition ${\cal C} = \{C_i:i\in\Z_2\}$ is the orbits of a semitransitive group $G$.  Let $i\in\Z_2$.  Let ${\cal P}\preceq{\cal C}$ be a $G$-invariant partition, and ${\cal P}_i$ be the sets of ${\cal P}$ contained in $C_i$.  Let $X = \Gamma/{\cal P}$, and  $Y$ be the set of all empty digraphs whose vertex-sets are elements of ${\cal P}$.  Then $\Gamma = \bigvee(X,Y)$ if and only if whenever $P_i \in \mathcal{P}_i$ and $P_{i+1} \in \mathcal{P}_{i+1}$, then there is an arc $(x_i,x_{i+1})$ from a vertex $x_i \in P_i$ to a vertex $x_{i+1} \in P_{i+1}$ if and only if every arc of the form $(x_i, x_{i+1})$ with $x_i \in P_i$ and $x_{i+1} \in P_{i+1}$ is contained in $A(\Gamma)$.
\end{theorem}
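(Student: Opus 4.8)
The plan is to reduce the statement to a direct comparison of arc sets after fixing a canonical identification of the two vertex sets, since the only content of ``$\Gamma = \bigvee(X,Y)$'' is that the arcs match up. First I would identify $V(\bigvee(X,Y))$ with $V(\Gamma)$: every vertex $v \in V(\Gamma)$ lies in a unique cell $P_v \in \mathcal{P}$, and because $Y_{P_v}$ is by definition the empty digraph on the vertex set $P_v$, the map $v \mapsto (P_v, v)$ is a bijection from $V(\Gamma)$ onto $V(\bigvee(X,Y))$. Under this identification the theorem becomes the assertion that $A(\Gamma) = A(\bigvee(X,Y))$ exactly when the stated all-or-nothing condition holds.

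Next I would unwind $A(\bigvee(X,Y))$. Since each $Y_x$ is empty, the clause ``$x = x'$ and $(y,y') \in A(Y_x)$'' in Definition \ref{X join definition} never contributes, so for $u \in P$ and $v \in Q$ (cells of $\mathcal{P}$) we have $(u,v) \in A(\bigvee(X,Y))$ if and only if $(P,Q) \in A(X) = A(\Gamma/\mathcal{P})$; and by the definition of the quotient digraph, $(P,Q) \in A(\Gamma/\mathcal{P})$ if and only if some arc runs from $P$ to $Q$ in $\Gamma$. Thus in $\bigvee(X,Y)$ the arcs between any two cells $P,Q$ are all present or all absent, governed solely by whether $\Gamma$ has at least one arc from $P$ to $Q$.

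With these observations the two implications are short. For the forward direction, assume $\Gamma = \bigvee(X,Y)$ and take $P_i \in \mathcal{P}_i$, $P_{i+1} \in \mathcal{P}_{i+1}$. If $\Gamma$ has an arc from $P_i$ to $P_{i+1}$ then $(P_i,P_{i+1}) \in A(\Gamma/\mathcal{P})$, so by the previous paragraph every arc from a vertex of $P_i$ to a vertex of $P_{i+1}$ lies in $A(\bigvee(X,Y)) = A(\Gamma)$; the reverse implication of the inner biconditional is immediate since $P_i,P_{i+1}$ are nonempty, giving the condition. For the converse I would use that $\mathcal{P}$ refines $\mathcal{C}$, so every arc of $\Gamma$ joins a cell of $\mathcal{P}_i$ to a cell of $\mathcal{P}_{i+1}$ for some $i$ (there are no arcs inside a single part). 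Checking the two inclusions: if $(u,v) \in A(\Gamma)$ with $u \in P_i$, $v \in P_{i+1}$, then an arc from $P_i$ to $P_{i+1}$ exists, so $(P_i,P_{i+1}) \in A(\Gamma/\mathcal{P})$ and hence $(u,v) \in A(\bigvee(X,Y))$; conversely if $(u,v) \in A(\bigvee(X,Y))$ with $u \in P_i$, $v \in P_{i+1}$, then $(P_i,P_{i+1}) \in A(\Gamma/\mathcal{P})$, so $\Gamma$ has some arc from $P_i$ to $P_{i+1}$, and the hypothesis forces \emph{every} such arc, in particular $(u,v)$, into $A(\Gamma)$. Hence the arc sets agree and $\Gamma = \bigvee(X,Y)$.

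I expect the argument to be essentially definition-chasing, with no serious obstacle; the one place to be careful is the bipartite bookkeeping in the converse---using that $\mathcal{P} \preceq \mathcal{C}$ to guarantee each arc crosses between the two parts and that the two arc directions ($P_0 \to P_1$ and $P_1 \to P_0$) are handled independently by the two values $i \in \Z_2$. The semitransitivity of $G$ and the $G$-invariance of $\mathcal{P}$ are used only to place us in this setting (ensuring $\mathcal{P}$ refines the bipartition and that $\Gamma/\mathcal{P}$ is the relevant quotient); they play no further role in the equivalence itself.
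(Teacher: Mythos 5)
Your proposal is correct and follows essentially the same route as the paper's proof: both directions are handled by direct comparison of the arc sets of $\Gamma$ and $\bigvee(X,Y)$ after identifying the vertex sets, with the forward direction reading off the all-or-nothing condition from the definition of the $X$-join and the converse checking the two inclusions using that $\mathcal{P}$ refines the bipartition. Your write-up is somewhat more explicit about the vertex identification and about unwinding $A(\bigvee(X,Y))$ when the $Y_x$ are empty, but the argument is the same.
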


\begin{proof}
Set ${\cal P} = {\cal P}(H_i,K_i)$.  Suppose $\Gamma = \bigvee(X,Y)$. Let $x_{i} \in P_{i} \in \mathcal{P}_{i}$, $i\in\Z_2$. Then
$$\begin{array}{lcl}
(x_{i},x_{i+1}) \in A(\Gamma) & \text{if and only if} & (P_{i}, P_{i+1}) \in A(\Gamma/\mP)\\
				& \text{if and only if} & (x_{i},x_{i+1}) \in A(\Gamma) 
                  \text{ for all } x_{i} \in P_{i} \text{ and } x_{i+1} \in P_{i+1}
\end{array}$$
by the definition of the $X$-join of $Y$.

For the converse, first observe that $V(\bigvee(X,Y)) = V(\Gamma)$.  Let $i\in\Z_2$, $x\in P_i$ and $y\in P_{i+1}$.  As ${\cal P}\prec{\cal C}$ and ${\cal C}$ is a bipartition of $\Gamma$, there is $x\in P_x\in{\cal P}_i$ and $y\in P_y\in{\cal P}_{i+1}$. Suppose $(x,y)\in A(\Gamma)$.  Then $(P_x,P_y)\in A(\Gamma/{\cal P})$ and $(x,y)\in A(\bigvee(X,Y))$.  Thus $A(\Gamma)\subseteq A(\bigvee(X,Y))$.  Now suppose $(x,y)\not\in A(\Gamma)$.  Then there is no arc from any vertex of $P_i$ to any vertex of $P_{i + 1}$, and so $(P_x,P_y)\not\in A(\bigvee(X,Y))$.  Thus $A(\Gamma) = A(\bigvee(X,Y))$, and the result follows.
\end{proof}

The next result is the version of the previous result for bicoset digraphs.

\begin{theorem} \label{bicoswr}
Let $G$ be a group, $i \in \Z_2$, $H_i \leq K_i\le  G$, and $S_i \subseteq G$ such that $S_i = H_{i}S_{i}H_{i+1}$. Let $\Gamma = B(G,H_0,H_1;S_{0},S_{1})$, and let $X = \Gamma/\mathcal{P}(H_i,K_i)$.  For $g\in G$, let $Y_{i,g}$ be the empty digraph on the left cosets of $H_i$ contained in $gK_i$, and $Y = \{Y_{i,g}:i\in\Z_2,g\in G\}$. Let ${\cal P}_i$ be the sets in ${\cal P}(H_i,K_i)$ contained in $B_i$.  Then $\Gamma \cong \bigvee(X,Y)$ if and only if whenever $P_i \in \mathcal{P}_i$ and $P_{i+1} \in \mathcal{P}_{i+1}$, then there is an arc $(x_i,x_{i+1})$ from a vertex $x_i \in P_i$ to a vertex $x_{i+1} \in P_{i+1}$ if and only if every arc of the form $(x_i, x_{i+1})$ with $x_i \in P_i$ and $x_{i+1} \in P_{i+1}$ is contained in $A(\Gamma)$.
\end{theorem}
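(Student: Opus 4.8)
The plan is to deduce this directly from the preceding general theorem by specializing its abstract semitransitive group to $\widehat{G}$. The relevant bipartite digraph is $\Gamma = B(G,H_0,H_1;S_0,S_1)$, whose bipartition ${\cal B} = \{B_0,B_1\}$ with $B_i = G/H_i$ is exactly the set of orbits of $\widehat{G}$: left multiplication is transitive on each coset space $G/H_i$, so $\widehat{G}$ has precisely the two orbits $B_0$ and $B_1$ and is therefore semitransitive. Thus $\Gamma$ meets the hypothesis of the preceding theorem with the roles of $G$ and ${\cal C}$ played by $\widehat{G}$ and ${\cal B}$.

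First I would verify that ${\cal P}(H_i,K_i)$ satisfies the hypothesis required of the partition in the preceding theorem, namely that it is a $\widehat{G}$-invariant partition refining ${\cal B}$. This is exactly the content of Lemma \ref{join partition}: since $H_i \le K_i \le G$, the partition ${\cal P}(H_i,K_i)$ refines ${\cal B}$ and is invariant under $\widehat{G}$, because left multiplication permutes the left cosets of $K_i$ as well as those of $H_i$. Next I would match up the remaining data. The quotient $X = \Gamma/{\cal P}(H_i,K_i)$ is precisely the $X$ of the preceding theorem. For the collection $Y$: each cell of ${\cal P}(H_i,K_i)$ contained in $B_i$ consists of the left cosets of $H_i$ lying inside a single left coset $gK_i$, and the empty digraph on that cell is by definition $Y_{i,g}$; hence $Y = \{Y_{i,g} : i \in \Z_2,\ g \in G\}$ is exactly the set of all empty digraphs whose vertex sets are the cells of ${\cal P}(H_i,K_i)$. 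Under the natural identification of a vertex $v \in V(\Gamma)$ with the pair $(P,v)$, where $P$ is the unique cell of ${\cal P}(H_i,K_i)$ containing $v$, we have $V(\bigvee(X,Y)) = V(\Gamma)$, so the equality ``$\Gamma = \bigvee(X,Y)$'' of the preceding theorem becomes the isomorphism ``$\Gamma \cong \bigvee(X,Y)$'' asserted here.

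With the hypotheses checked and the data identified, the arc condition appearing in the statement is word-for-word the arc condition of the preceding theorem (with ${\cal C}$ replaced by ${\cal B}$ and its partition by ${\cal P}(H_i,K_i)$), so the claimed equivalence follows at once. I do not anticipate a genuine obstacle here: the only points requiring care are confirming that ${\cal P}(H_i,K_i)$ is the appropriate $\widehat{G}$-invariant partition and that the indexed family $\{Y_{i,g}\}$ exhausts the empty digraphs on its cells, and both are immediate from Lemma \ref{join partition} and the definition of the join partition ${\cal P}(H_i,K_i)$. Alternatively, one could reprove the statement from scratch by repeating the neighborhood argument used in the proof of the preceding theorem, but the reduction is cleaner and avoids duplicating that work.
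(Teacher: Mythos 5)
Your reduction is exactly what the paper intends: Theorem \ref{bicoswr} is stated there without proof, introduced only as ``the version of the previous result for bicoset digraphs,'' i.e.\ as the specialization of the general semitransitive-group theorem to $\widehat{G}$ acting on $(G/H_0)\cup(G/H_1)$ with the $\widehat{G}$-invariant partition ${\cal P}(H_i,K_i)$ supplied by Lemma \ref{join partition}. Your write-up correctly fills in the details of that specialization, so it matches the paper's approach.
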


We are now ready to prove our main theorem regarding recognition of bicoset digraphs that are $X$-joins using their connection set.  This is the analog of \cite[Theorem 3.3]{BarberDpreprint} for bicoset digraphs.  

\begin{theorem} \label{bicosjoin}
Let $G$ be a group, $i \in \mathbb{Z}_{2}$, $H_i\le K_i\le G$, and $S_{i} \subseteq G$ such that $S_{i} = H_{i}S_{i}H_{i+1}$.  Let $\Gamma = B(G,H_0,H_1;S_0,S_1)$ have arcs, and let $X = \Gamma /{\cal P}(H_i,K_i)$.  For $g\in G$, let $Y_{i,g}$ be the empty digraph on the left cosets of $H_i$ contained in $gK_i$, and $Y = \{Y_{i,g}:i\in\Z_2,g\in G\}$.  Then $\Gamma = \bigvee(X,Y)$ if and only if $S_i = K_iS_iK_{i+1}$.  If $S_i = K_iS_iK_{i+1}$, then $ X = B(G,K_0,K_1;S_0,S_1)$.  Also, if $K_0$ and $K_1$ are chosen to be as large as possible, then $X$ is irreducible.  In addition, if $X$ is weakly connected when $\Aut(X)$ is transitive, then $\Aut(\Gamma)$ is the complete group of natural automorphisms of $\bigvee(X,Y)$. 
\end{theorem}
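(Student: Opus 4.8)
The plan is to derive the biconditional and the identification of $X$ from Theorem~\ref{bicoswr} by translating its arc condition into the language of double cosets, to obtain irreducibility from the theory of the unworthy partition developed in Section 3, and to read off the automorphism group from Corollary~\ref{bicoset join}. Throughout I use the arc rule $(xH_i,yH_{i+1})\in A(\Gamma)$ if and only if $x^{-1}y\in S_i$, proved exactly as for double coset digraphs.

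By Theorem~\ref{bicoswr}, $\Gamma\cong\bigvee(X,Y)$ holds precisely when, for all $P_i\in{\cal P}_i$ and $P_{i+1}\in{\cal P}_{i+1}$, some arc runs from $P_i$ to $P_{i+1}$ exactly when every such arc is present. The cell $P_i$ consists of the left cosets of $H_i$ inside some $aK_i$ and $P_{i+1}$ of those inside some $bK_{i+1}$, and as $a'$ ranges over $aK_i$ and $b'$ over $bK_{i+1}$ the products $(a')^{-1}b'$ range over the double coset $K_i(a^{-1}b)K_{i+1}$. Hence the all-or-nothing condition asserts that for every $c\in G$ the double coset $K_icK_{i+1}$ meets $S_i$ if and only if it is contained in $S_i$, which says exactly that $S_i$ is a union of $(K_i,K_{i+1})$-double cosets, i.e. $S_i=K_iS_iK_{i+1}$. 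This is the desired biconditional. The same computation identifies $X$: the arc $(aK_i,bK_{i+1})$ of $\Gamma/{\cal P}(H_i,K_i)$ is present iff $K_i(a^{-1}b)K_{i+1}\cap S_i\neq\emptyset$, which under $S_i=K_iS_iK_{i+1}$ is equivalent to $a^{-1}b\in S_i$, the arc rule of $B(G,K_0,K_1;S_0,S_1)$; since the vertex sets coincide canonically, $X=B(G,K_0,K_1;S_0,S_1)$.

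The step I expect to be the main obstacle is showing $X$ is irreducible when $K_0,K_1$ are as large as possible. By Lemma~\ref{R and Kernel} the unworthy partition of $\Gamma$ equals ${\cal P}(H_i,K_i^*)$ for some $H_i\le K_i^*\le G$. I would argue that $(K_0^*,K_1^*)$ is the unique largest valid pair, via two observations. First, whenever $S_i=K_iS_iK_{i+1}$, the invariances $K_iS_i=S_i$ and $S_{i+1}K_i=S_{i+1}$ that this forces show that any two cosets $aH_i$ and $akH_i$ with $k\in K_i$ have identical out- and in-neighborhoods; thus they are twins, ${\cal P}(H_i,K_i)$ refines the unworthy partition ${\cal P}(H_i,K_i^*)$, and so $K_i\le K_i^*$. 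Second, $(K_0^*,K_1^*)$ is itself valid: since vertices sharing a cell of the unworthy partition are twins, the all-or-nothing arc condition of Theorem~\ref{bicoswr} holds automatically, and the translation above yields $S_i=K_i^*S_iK_{i+1}^*$. Consequently ``as large as possible'' forces $K_i=K_i^*$, so ${\cal P}(H_i,K_i)$ is the unworthy partition and $X=\Gamma/{\cal P}(H_i,K_i^*)$ is irreducible by Lemma~\ref{Gamma mod is irreducible}.

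Finally, with $K_0,K_1$ maximal, $X=B(G,K_0,K_1;S_0,S_1)$ is an irreducible bicoset digraph that is weakly connected when $\Aut(X)$ is transitive, and each $Y_{i,g}\in Y$ lying in $B_i$ is an empty digraph on $[K_i:H_i]$ vertices, a number independent of $g$. Thus any two elements of $Y$ contained in a fixed $B_i$ have the same order, so the second alternative of condition (2) in Corollary~\ref{bicoset join} holds. That corollary then gives that $\Aut(\Gamma)$ is the complete group of natural automorphisms of $\bigvee(X,Y)$, completing the proof.
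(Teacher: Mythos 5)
Your proposal is correct and follows essentially the same route as the paper's: the biconditional and the identification of $X$ come from Theorem~\ref{bicoswr} together with the double-coset translation of the arc rule $(xH_i,yH_{i+1})\in A(\Gamma)\iff x^{-1}y\in S_i$; irreducibility comes from identifying ${\cal P}(H_i,K_i)$ with the unworthy partition ${\cal P}(H_i,K_i^*)$ supplied by Lemma~\ref{R and Kernel} and invoking maximality; and the automorphism group is read off from Corollary~\ref{bicoset join} (the paper routes this last step through Theorem~\ref{xjaut}, which is itself derived from that corollary). The only differences are organizational --- you establish $K_i=K_i^*$ directly by showing each valid pair is dominated by $(K_0^*,K_1^*)$ and that the latter is itself valid, where the paper argues by contradiction --- and they do not change the substance.
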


\begin{proof}
Set ${\cal P} = {\cal P}(H_i,K_i)$.  Suppose $\Gamma \cong \bigvee(X,Y)$.  Let $s_i \in S_i$.  Then $H_i$ is out-adjacent to $s_{i}H_{i+1} \subseteq s_{i}K_{i+1}$.  As $\Gamma \cong \bigvee(X,Y)$, by Theorem \ref{bicoswr}, every vertex of $K_i$ is out-adjacent to every vertex of $s_iK_{i+1}$.  Let $k_i^{-1}\in K_i$ and $k_{i+1}\in K_{i+1}$.  Then $(k_i^{-1}H_i,s_ik_{i+1}H_{i+1} )\in A(\Gamma)$ as $k_i^{-1}H_i$ is a vertex of $K_i$ and $k_{i+1}H_{i+1}$ is a vertex in $K_{i+1}$, and so $k_is_ik_{i+1}\in S_i$.  As $k_i$ and $k_{i+1}$  are arbitrary, $K_is_iK_{i+1}\subseteq S_i$ and as $s_i$ is arbitrary, $S_i$ is a union of $(K_i,K_{i+1})$-double cosets. 

Conversely, suppose $H_{i} \leq K_{i} \leq G$ such that $S_{i} = K_iS_iK_{i+1}$.  Let $(xH_{i},yH_{i+1}) \in A(\Gamma)$, where $x,y\in G$.  Then $x^{-1}y\in S_{i}$, and as $S_{i} = K_iS_iK_{i+1}$, we see  $k_{i}^{-1}x^{-1}yk_{i+1} \in S_{i}$ for every $k_{i}\in K_{i}$ and $k_{i+1}\in K_{i+1}$.  Then $(xk_{i}H_{i}, yk_{i+1} H_{i+1}) \in A(\Gamma)$ for all $k_{i}\in K_{i}$ and $k_{i+1}\in K_{i+1}$.  So every vertex contained in $xK_{i}$ is out-adjacent in $\Gamma$ to every vertex in $yK_{i+1}$ (here $K_{i}$ and $K_{i+1}$ are viewed as unions of cosets of $H_{i}$ and $H_{i+1}$, respectively) for every $(xH_i,yH_{i+1})\in A(\Gamma)$. As $\Gamma[\{i\}\times gK_i]$ has no arcs, by Theorem \ref{bicoswr} $\Gamma = \bigvee(\Gamma/\mP, Y)$.

Suppose that $K_iS_iK_{i + 1} = S_i$.  We now show $X = B(G, K_0, K_1; S_0, S_1)$. First, $B(G,K_0,K_1;S_0,S_1)$ exists and is well-defined by supposition.  By the definition of $B(G,K_0,K_1;S_0,S_1)$, we have $V(X) = {\cal P} = V(B(G,K_0,K_1;S_0,S_1))$. Let $a,b\in G$. Then 
\begin{eqnarray*}
(aK_{i},bK_{i+1})\in A(B(G,K_0,K_1; S_0, S_1)) & {\rm\ if\ and\ only\ if\ } & a^{-1}b \in S_{i}\\
                                               & {\rm\ if\ and\ only\ if\ } & (aH_i,bH_{i+1})\in A(B(G,H_0,H_1;S_0,S_1))\\
                                               & {\rm\ if\ and\ only\ if\ } & (aK_i,bK_{i+1})\in A(X).
\end{eqnarray*}
This shows that $X = B(G, K_0, K_1; S_0, S_1)$.  To see that $\Gamma = \bigvee(X,Y)$, observe that $\bigvee(X, Y)$ has vertex set $V(B(G,H_0,H_1;S_0,S_1))$, and arc set $$\{(ak_iH_i,bk_{i+1}H_{i+1}):(aK_i,bK_{i+1})\in A(X){\rm\ and\ }k_iH_i\in K_i/H_i, k_{i+1}H_{i+1}\in K_{i+1}/H_{i+1}\}.$$  As $(aK_i,bK_{i+1})\in A(B(G,K_0,K_1;S_0,S_1))$, we see that $b = as_i$ for some $s_i\in S_i$, or $s_i = a^{-1}b$.  The arc  $(ak_iH_i,bk_{i+1}H_{i+1})$ is an arc of $\Gamma$ if and only if $k_{i+1}^{-1}a^{-1}bk_{i+1}\in S_i$, which, as $S_i = K_iS_iK_{i+1}$, is true.  Thus $\Gamma = \bigvee(X,Y)$.

Now additionally assume that $K_0$ and $K_1$ are chosen to be as large as possible.  Let ${\cal E}$ be the unworthy partition of $\Gamma$.  By Lemma \ref{P refines B} we have ${\cal E}\prec{\cal B}$.  Let ${\cal E}_i$ be the set of elements of ${\cal E}$ contained in $B_i$.  By Lemma \ref{R and Kernel} we see that ${\cal E}_i$ is the set of orbits of the kernel of the action of $F$ on $B_{i+1}$, where $F$ is the set-wise stabilizer of each block of ${\cal B}$, and is the set of left cosets of some subgroup $H_i\le K_i\le G$. By Lemma \ref{Gamma mod is irreducible} $\Gamma/{\cal E}$ is irreducible.  Also, as $K_iS_iK_{i+1} = S_i$, by the first part of this theorem, we see that $\Gamma = \bigvee(\Gamma/\mP, Y)$.  We then have ${\cal P}\preceq{\cal E}$ as $u R_{\Gamma} v$ for any $u,v \in Y_x$, $x\in X$.  However, if ${\cal P}\not = {\cal E}$, then $\Gamma/{\cal P}$ is not irreducible, and so by the first part of this theorem, there is $K_i < K_i'$ such that $K_i'S_iK_{i+1}' = S_i$, and $K_i$ is not the largest subgroup for which $K_iS_iK_{i+1} = S_{i+1}$, a contradiction.  Thus ${\cal P} = {\cal E}$, and $X = \Gamma/{\cal P}$ is irreducible by Theorem \ref{bipartite join}. 

If, in addition to $K_0$ and $K_1$ being chosen to be as large as possible, so that the unworthy partition of $V(\Gamma)$ is the join partition ${\cal P}(H_i,K_i)$, we also have that if $\Gamma$ is weakly connected when $\Aut(\Gamma)$ is transitive, then by Theorem \ref{xjaut}, we have that $$\Aut(\Gamma/{\cal P}(H_i,K_i)) = \Aut\left(\bigvee(X,Y)\right) = \Aut(\Gamma)$$ is the complete group of natural automorphisms of $\bigvee(X,Y)$.
\end{proof}

Some observations are in order.  First, it is possible that a bicoset digraph $\Gamma = B(G,H_0, H_1;S_0,S_1)$ is isomorphic to an $X$-join of $Y$, where $X = B(G',H_0',H_1';S_0,S_1)$ is a bicoset digraph and $Y$ is a set of empty digraphs, but there is no relationship at all between $G$ and $G'$, as the next example shows.  So the result above does not capture {\it all} the ways a bicoset digraph can be isomorphic to an $X$-join of a set of empty digraphs, but rather only those where $X$ is a quotient of $\Gamma$ using a partition of $V(\Gamma)$ which is a ${\cal P}(H_i,K_i)$-join partition of $V(\Gamma)$.

\begin{example}\label{example 1}
The bicoset graph $\Gamma = B(\Z_5,1,\Z_5;\{0\},\{0\})$ is a star on $6$ vertices (that is, it has six vertices with one vertex of degree $5$ and $5$ vertices of degree $1$), and is isomorphic to the $K_2$-join  of $\{K_1,\bar{K}_5\}$, where $\bar{K}_5$ is the complement of $K_5$.  Let $X = B(\Z_2,1,\Z_2;\{0\}, \{0\})$ (so $X$ is the star on $3$ vertices), and label the vertices of $X$ with elements of $\{x,y,z\}$ such that the unique vertex of $X$ of degree $2$ is $x$.  Set $Y_x = K_1$, $Y_y = \bar{K}_2$, $Y_z = \bar{K}_3$, and $Y = \{Y_x,Y_y,Y_z\}$.  Then $\Gamma$ is isomorphic to the $X$-join of $Y$, and is also a star on $6$ vertices.
\end{example}

\begin{problem}
Determine necessary and sufficient conditions for a bicoset digraph to be isomorphic to an $X$-join with a collection of empty digraphs.
\end{problem}

The next result gives when a bicoset digraph is a wreath product.

\begin{corollary}\label{join is wreath}
Let $G$ be a group, $i \in \mathbb{Z}_{2}$, $H_i\le K_i\le G$, and $S_{i} \subseteq G$ such that $S_{i} = H_{i}S_{i}H_{i+1}$.  Let $\Gamma = B(G,H_0,H_1;S_0,S_1)$ have arcs, and let $X = \Gamma /{\cal P}(H_i,K_i)$.  For $g\in G$, let $Y_{i,g}$ be the empty digraph on the left cosets of $H_i$ contained in $gK_i$, and $Y = \{Y_{i,g}:i\in\Z_2,g\in G\}$.  Then $\Gamma\cong \Gamma/{\cal P}(H_i,K_i)\wr \bar{K}_m$ if and only if $S_i = K_iS_iK_{i+1}$ and $m = [K_0 : H_0] = [K_1 : H_1]$.  Also, if $K_0$ and $K_1$ are chosen to be as large as possible, then $X$ is irreducible.  In addition, if $X$ is weakly connected when $\Aut(X)$ is transitive, then $\Aut(\Gamma) \cong \Aut(X)\wr S_m$. 
\end{corollary}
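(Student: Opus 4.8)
The plan is to derive Corollary \ref{join is wreath} as a specialization of Theorem \ref{bicosjoin}, where the additional hypothesis is precisely that all the empty digraphs in $Y$ have a common order $m$. The wreath product $\Gamma/{\cal P}(H_i,K_i)\wr\bar{K}_m$ is by definition the $X$-join $\bigvee(X,Y)$ in the special case where every $Y_{i,g}$ is an empty digraph on $m$ vertices, so the task reduces to translating the condition ``every element of $Y$ has the same order $m$'' into the stated index condition $m=[K_0:H_0]=[K_1:H_1]$.

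First I would establish the iff for the isomorphism. By Theorem \ref{bicosjoin}, $\Gamma=\bigvee(X,Y)$ if and only if $S_i=K_iS_iK_{i+1}$, so this part of the hypothesis is inherited directly. It then remains to observe that $\bigvee(X,Y)$ is a genuine wreath product $X\wr\bar{K}_m$ exactly when all the fibers $Y_{i,g}$ are mutually isomorphic empty digraphs, i.e.\ all have the same cardinality. Since $Y_{i,g}$ is the empty digraph on the left cosets of $H_i$ contained in $gK_i$, its order is $\vert gK_i/H_i\vert=[K_i:H_i]$, which is independent of $g$. Hence the orders of the fibers take at most two values, $[K_0:H_0]$ and $[K_1:H_1]$, and they are all equal to a common value $m$ precisely when $[K_0:H_0]=[K_1:H_1]=m$. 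This is the only extra condition needed beyond $S_i=K_iS_iK_{i+1}$, which gives the stated equivalence.

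Next I would handle the irreducibility claim and the automorphism group conclusion, both of which are immediate from Theorem \ref{bicosjoin}. When $K_0$ and $K_1$ are chosen as large as possible, Theorem \ref{bicosjoin} already shows $X=\Gamma/{\cal P}(H_i,K_i)$ is irreducible, with no change required. For the automorphism group, under the weak-connectivity hypothesis Theorem \ref{bicosjoin} gives that $\Aut(\Gamma)$ is the complete group of natural automorphisms of $\bigvee(X,Y)$. When all fibers have common order $m$, the final sentence of Theorem \ref{xjaut} (invoked through Corollary \ref{bicoset join}) identifies this complete group of natural automorphisms with $\Aut(X)\wr S_m$, since permuting the fibers according to $\Aut(X)$ and acting by an arbitrary element of $S_m$ within each fiber of common size $m$ generates exactly the wreath product. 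This yields $\Aut(\Gamma)\cong\Aut(X)\wr S_m$.

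I do not expect a genuine obstacle here, as the corollary is essentially a bookkeeping specialization of the preceding theorem. The one point requiring a little care is the direction of the index condition: one must verify that $m=[K_i:H_i]$ is forced to be the same value for both $i\in\Z_2$, rather than merely asserting it, and that this common value is what makes the $X$-join collapse to a wreath product. This is exactly the content of the observation that $\vert Y_{i,g}\vert=[K_i:H_i]$ together with the requirement that all fibers in $Y$ be isomorphic. Everything else is a direct citation of Theorem \ref{bicosjoin}, Theorem \ref{xjaut}, and the definition of the $X$-join, so I would write the proof as a short reduction rather than a self-contained argument.
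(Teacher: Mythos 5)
Your proposal is correct and follows essentially the same route as the paper: the paper's own proof simply invokes Theorem \ref{bicosjoin} and notes that $\bigvee(X,Y)\cong\Gamma/{\cal P}\wr\bar{K}_m$ if and only if all the $Y_{i,g}$ are isomorphic, i.e.\ $m=[K_0:H_0]=[K_1:H_1]$. Your write-up is in fact slightly more complete than the paper's, since you also spell out how the irreducibility and automorphism-group claims follow from Theorem \ref{bicosjoin} and Theorem \ref{xjaut}, which the paper leaves implicit.
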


\begin{proof}
By Theorem \ref{bicosjoin}, we have that $\Gamma\cong\bigvee(X,Y)$.  The digraph $\bigvee(X,Y)\cong \Gamma/{\cal P}\wr\bar{K}_m$ if and only if each $Y_{i,j}$ is isomorphic, which occurs if and only if $m = [K_0 : H_0] = [K_1 : H_1]$.
\end{proof}

Notice that it is possible for a bicoset digraph $\Gamma$ to be simultaneously isomorphic to a wreath product and an $X$-join of empty digraphs that is not written as a wreath product.

\begin{example}\label{example 2}
Let $n\ge 3$ and $G$ a group of order $n$.  Then $K_{n,n}\cong B(G,1,1;G,G)\cong K_2\wr\bar{K}_n$.  Let $\{S_1,S_2\}$ be a partition of $G$ into two subsets of different sizes, $P_1 = \{(0,p_1):p_1\in S_1\}$, $P_2 = \{(0,p_2):p_2\in S_2\}$, and $P_3 = \{(1,g):g\in G\}$. Set ${\cal P}_0 = \{P_1,P_2\}$ and ${\cal P}_1 = \{P_3\}$.  Then ${\cal P} = {\cal P}_0\cup {\cal P}_{1}$ is a partition of $V(B(G,1,1;G,G))$.  Let $X$ be the digraph with $V(X) = {\cal P}$ and arc set $\{(P_1,P_3),(P_3,P_1),(P_2,P_3),(P_3,P_2)\}$, and let $Y_{P_i}$ be the digraph with vertex set $P_i$ and no arcs, $i = 1,2,3$.  Then the vertex set of $B(G,1,1;G,G) = \bigvee(X,Y)$ is not the vertex set of a wreath product of two graphs, and so is not a wreath product as $\vert S_1\vert\not = \vert S_2\vert$ (it is though, of course, {\it isomorphic} to a wreath product).
\end{example}

When $H_0 = H_1 = \{1_G\}$, $B(G,H_0,H_1,S) \cong \text{Haar}(G,S)$, and we have a special case of Theorem \ref{bicosjoin} for Haar graphs.

\begin{corollary}\label{Haar cor}
Let $G$ be a group, $i \in \mathbb{Z}_{2}$, $K_{i} \leq G$, and $S_{i} \subseteq G$. Let $\Gamma = \Haar(G,S_0,S_1)$ have arcs, and let $X = \Gamma /\mP(1,K_i)$. For $g\in G$, let $Y_{i,g}$ be the empty digraph on the left cosets of $gK_i$ and $Y = \{ Y_{i,g} :i\in\Z_2,g \in G \}$. Then $\Gamma = \bigvee(X,Y)$ if and only if $S_i = K_iS_iK_{i + 1}$. If $S_i = K_iS_iK_{i+1}$, then $X = B(G,K_0,K_1;S_0,S_1)$.  Also, if the $K_i$ are chosen to be as large as possible, then $X$ is irreducible.  In addition, if $X$ is weakly connected when $\Aut(\Gamma)$ is transitive, then $\Aut(\Gamma)$ is the group of natural automorphisms of $\bigvee(X,Y)$.
\end{corollary}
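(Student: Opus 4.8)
The plan is to obtain this corollary as the specialization of Theorem \ref{bicosjoin} to the case $H_0 = H_1 = \{1_G\}$. As recorded following Definition \ref{Haar defin}, when both subgroups are trivial the bicoset digraph $B(G,H_0,H_1;S_0,S_1)$ coincides with $\Haar(G,S_0,S_1)$, so my first step is simply to make the identification $\Gamma = B(G,\{1_G\},\{1_G\};S_0,S_1)$ explicit and then invoke the stronger theorem.

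Next I would verify that the hypotheses of Theorem \ref{bicosjoin} are satisfied in this setting. With $H_i = \{1_G\}$ the standing assumption $S_i = H_iS_iH_{i+1}$ degenerates to $S_i = S_i$ and so holds vacuously, which is precisely why the corollary needs no hypothesis constraining $S_0,S_1$; likewise the chain $H_i \le K_i \le G$ reduces to the bare requirement $K_i \le G$ stated in the corollary. I would also check that the specialized data matches the data of the theorem: since the left cosets of $H_i = \{1_G\}$ are singletons, the join partition ${\cal P}(1,K_i)$ is exactly the partition of $B_i$ into left cosets of $K_i$, and each $Y_{i,g}$ is the empty digraph on the singleton cosets contained in $gK_i$, so the collection $Y$ here agrees with the collection $Y$ appearing in Theorem \ref{bicosjoin}.

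With these identifications in place, every clause of the corollary is read off from the corresponding clause of Theorem \ref{bicosjoin}: the equivalence between $\Gamma = \bigvee(X,Y)$ and the condition $S_i = K_iS_iK_{i+1}$; the identity $X = B(G,K_0,K_1;S_0,S_1)$ when that condition holds; the irreducibility of $X$ when the $K_i$ are chosen as large as possible; and the description of $\Aut(\Gamma)$ as the (complete) group of natural automorphisms of $\bigvee(X,Y)$ under the weak-connectivity hypothesis. There is no genuine obstacle here, as the result is a direct corollary; the only points requiring care are bookkeeping ones, namely confirming that the singleton-coset description of ${\cal P}(1,K_i)$ and of the $Y_{i,g}$ coincides with the general join partition and empty-digraph collection of the theorem, and observing that the double-coset condition on $S_i$ is automatic in the Haar setting so that the full conclusion transfers without additional assumptions.
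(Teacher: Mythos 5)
Your proposal is correct and matches the paper exactly: the paper presents Corollary \ref{Haar cor} as an immediate specialization of Theorem \ref{bicosjoin} to $H_0 = H_1 = \{1_G\}$, with no separate argument beyond the identification $B(G,\{1_G\},\{1_G\};S_0,S_1) = \Haar(G,S_0,S_1)$. Your bookkeeping checks (the vacuity of $S_i = H_iS_iH_{i+1}$, the singleton-coset description of ${\cal P}(1,K_i)$ and of the $Y_{i,g}$) are exactly the right points to verify.
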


The next corollary gives when a Haar digraph is isomorphic to a wreath product.

\begin{corollary}\label{diHaar auto cor}
Let $G$ be a group, $i \in \mathbb{Z}_{2}$, $K_{i} \leq G$, and $S_{i} \subseteq G$. Let $\Gamma = \Haar(G,S_0,S_1)$ have arcs, and let $X = \Gamma /\mP(1,K_i)$. For $g\in G$, let $Y_{i,g}$ be the empty digraph on the left cosets of $gK_i$ and $Y = \{ Y_{i,g} :i\in\Z_2,g \in G\}$. Then $\Gamma = \Gamma / \mP \wr \overline{K}_{m}$ if and only if $S_i = K_iS_iK_{i+1}$ and $m = |K_0| = |K_1|$.  If $S_i = K_iS_iK_{i+1}$ and $\vert K_0\vert = \vert K_1\vert$, then $X = B(G,K_0,K_1;S_0,S_1)$.  Also, if the $K_i$ are chosen to be as large as possible, then $X$ is irreducible.  In addition, if $X$ is weakly connected when $\Aut(\Gamma)$ is transitive, then $\Aut(\Gamma) \cong \Aut(X)\wr S_m$.
\end{corollary}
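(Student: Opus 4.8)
The plan is to obtain this corollary as the special case $H_0 = H_1 = \{1_G\}$ of Corollary \ref{join is wreath}. First I would record the identification already noted in the paper: when $H_0 = H_1 = \{1_G\}$, the bicoset digraph $B(G,H_0,H_1;S_0,S_1)$ is precisely $\Haar(G,S_0,S_1)$, so the digraph $\Gamma$ in the present statement is exactly the $\Gamma$ of Corollary \ref{join is wreath} taken with trivial subgroups $H_i$. I would then check that the hypotheses line up. The standing condition $S_i = H_iS_iH_{i+1}$ of Corollary \ref{join is wreath} reduces to $S_i = S_i$ when $H_i = \{1_G\}$, hence is automatic; this is why the present statement imposes no restriction on the $S_i$ beyond $S_i \subseteq G$. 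Likewise the join partition ${\cal P}(H_i,K_i)$ becomes ${\cal P}(1,K_i)$, and since the left cosets of $H_i = \{1_G\}$ are the singletons, the empty digraph on the left cosets of $H_i$ contained in $gK_i$ is exactly the empty digraph on $gK_i$, matching the $Y_{i,g}$ defined here.

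With these identifications in place, every conclusion transfers. The only genuine computation is the translation of the wreath-product index: in Corollary \ref{join is wreath} one has $\Gamma \cong \Gamma/{\cal P}\wr \bar{K}_m$ exactly when $S_i = K_iS_iK_{i+1}$ and $m = [K_0:H_0] = [K_1:H_1]$, and with $H_i = \{1_G\}$ we have $[K_i:H_i] = |K_i|$, so the criterion becomes $S_i = K_iS_iK_{i+1}$ and $m = |K_0| = |K_1|$, as stated. The identity $X = B(G,K_0,K_1;S_0,S_1)$ under the assumption $S_i = K_iS_iK_{i+1}$ follows from Theorem \ref{bicosjoin}, while the irreducibility of $X$ when the $K_i$ are chosen as large as possible and the description $\Aut(\Gamma) \cong \Aut(X)\wr S_m$ under the weak-connectedness hypothesis hold by the corresponding clauses of Corollary \ref{join is wreath} (equivalently, of Theorem \ref{bicosjoin}).

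There is essentially no obstacle here: the result is a routine specialization, and the only point requiring care is bookkeeping, namely confirming that triviality of the $H_i$ simultaneously discharges the double-coset hypothesis on the $S_i$ and converts each index $[K_i:H_i]$ into the order $|K_i|$. Accordingly, I would write the proof as a single short paragraph: observe that $H_0 = H_1 = \{1_G\}$ gives $\Gamma = B(G,H_0,H_1;S_0,S_1) = \Haar(G,S_0,S_1)$ with $[K_i:H_i] = |K_i|$, and invoke Corollary \ref{join is wreath} together with Theorem \ref{bicosjoin}.
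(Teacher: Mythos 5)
Your proposal is correct and matches the paper's intent exactly: the paper presents this corollary as the immediate specialization of Corollary \ref{join is wreath} (and Theorem \ref{bicosjoin}) to $H_0 = H_1 = \{1_G\}$, with no further argument given. Your bookkeeping — the double-coset condition on $S_i$ becoming vacuous and $[K_i:H_i]$ becoming $|K_i|$ — is precisely what is needed.
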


\bibliography{References}{}
\bibliographystyle{amsplain}

\end{document}